\newcommand{\margnote}[1]{
\ifthenelse{\boolean{shownotes}}%
{\marginpar{\raggedright\tiny\texttt{#1}}}%
{}%
}
\newcommand{\hole}[1]{
\ifthenelse{\boolean{shownotes}}%
{\begin{center} \fbox{ \rule {.25cm}{0cm} \rule[-.1cm]{0cm}{.4cm}
\parbox{.85\textwidth}{\begin{center} \texttt{#1}\end{center}} \rule
{.25cm}{0cm}}\end{center}} {} }
\title[Large friction limit of pressureless Euler equations with nonlocal forces]{Large friction limit of pressureless Euler equations with nonlocal forces}
\author[Choi]{Young-Pil Choi}
\address[Young-Pil Choi]{\newline Department of Mathematics \newline
Yonsei University, 50 Yonsei-Ro, Seodaemun-Gu, Seoul 03722, Republic of Korea}
\email{ypchoi@yonsei.ac.kr}
\numberwithin{equation}{section}
\newtheorem{theorem}{Theorem}[section]
\newtheorem{lemma}{Lemma}[section]
\newtheorem{proposition}{Proposition}[section]
\newtheorem{remark}{Remark}[section]
\newtheorem{definition}{Definition}[section]
\DeclareMathOperator*{\esssup}{ess\,sup}
\newcommand{\R}{\mathbb R}
\newcommand{\om}{\Omega}
\newcommand{\T}{\mathbb T}
\newcommand{\mc}{\mathcal C}
\newcommand{\bq}{\begin{equation}}
\newcommand{\eq}{\end{equation}}
\newcommand{\e}{\varepsilon}
\newcommand{\lt}{\left}
\newcommand{\rt}{\right}
\newcommand{\pa}{\partial}
\newcommand{\mt}{\mathcal{T}}
\newcommand{\W}{\mathcal{W}}
\newcommand{\intt}{\int_{\om}}
\newcommand{\inttt}{\iint_{\om \times \om}}
\begin{document}
%%%%%%%%%%%%%%%%
\allowdisplaybreaks

%\date{\today}

\subjclass[]{}
\keywords{Large friction limit, pressureless Euler equations, nonlocal interaction forces, relative entropy, Wasserstein distance.}

\begin{abstract} We rigorously show a large friction limit of hydrodynamic models with alignment, attractive, and repulsive effects. More precisely, we consider pressureless Euler equations with nonlocal forces and provide a quantitative estimate of large friction limit to a continuity equation with nonlocal velocity fields, which is often called an aggregation equation. Our main strategy relies on the relative entropy argument combined with the estimate of $p$-Wasserstein distance between densities.

\end{abstract}

\maketitle \centerline{\date}

%\tableofcontents

%%%%%%%%%%%%%%%%%%%%%%%%%%%%%%%%%%%%%%%%%%%%%%%%%%%%%%%%%%%%%%%%%%%%%%%%%%%%%%%%%5
%
%
%                        Section: Introduction 
%
%
%%%%%%%%%%%%%%%%%%%%%%%%%%%%%%%%%%%%%%%%%%%%%%%%%%%%%%%%%%%%%%%%%%%%%%%%%%%%%%%%%
\section{Introduction}
In this paper, we are interested in a large friction limit of pressureless Euler equations with nonlocal forces, referred to as Euler-Alignment models \cite{CCTT16}, in the domain $\om$, which is either $\T^d$ or $\R^d$, with $d\geq 1$. Let $\rho = \rho(x,t)$ and $u = u(x,t)$ be the density and velocity of the flow at $(x,t) \in \om \times \R_+$, respectively. Then our main system is given by 
\begin{align}\label{main_eq}
\begin{aligned}
&\pa_t \rho + \nabla_x \cdot (\rho u) = 0, \quad (x,t) \in \om \times \R_+,\cr
&\e\pa_t (\rho u) + \e \nabla_x \cdot (\rho u \otimes u) = -\gamma \rho u - (\nabla_x W \star \rho)\rho + \rho \intt \phi(x-y)(u(y) - u(x))\rho(y)\,dy.
\end{aligned}
\end{align}
Here $\gamma > 0$ is the strength of linear damping, $W: \om \to \R$ denotes the interaction potential, $\phi:\om \to \R_+$ represents a communication weight function. Throughout this paper, we assume that $\phi$ and $W$ satisfy $\phi(-x) = \phi(x)$ and $W(-x) = W(x)$ for $x \in \om$, respectively. 

The macroscopic model \eqref{main_eq} can be derived from Newton-type equations, which is a microscopic model, via a kinetic formulation. Consider a system of $N$ particles whose state can be defined by positions $x_i(t)$ and velocities $v_i(t)$, respectively, at time $t>0$. The evolution of this system is governed by the following system of ordinary differential equations:
\begin{align}\label{par_sys}
\begin{aligned}
\frac{d x_{i}(t)}{dt} &= v_i(t), \quad i=1,\dots, N, \quad t > 0,\cr
\e\frac{d v_{i}(t)}{dt} &= -\gamma v_i(t) - \frac1N\sum_{j=1}^N \nabla_x W(x_i(t)-x_j(t)) +  \frac1N\sum_{j=1}^N \phi(x_i(t) - x_j(t))(v_j(t) - v_i(t)).
\end{aligned}
\end{align}
The second term on the right hand side of the above differential equations for $v_i$ represents attractive/repulsive forces, and the third serves as a nonlocal velocity alignment force, see \cite{CCP17} for more discussion. If we ignore the linear damping and the interactions between particles through the potential function $W$, i.e., $\gamma = 0$ and $W \equiv 0$, then the particle system \eqref{par_sys} becomes the celebrated Cucker--Smale model \cite{CS07,HL09,HT08} for flocking behaviors. We refer to \cite{CHL17} for a general introduction to the Cucker--Smale model and its variants.

As the number of particles $N$ goes to infinity, we can derive a kinetic equation by means of mean-field limits or BBGKY hierarchies \cite{CCHS19, HL09, HT08}. More precisely, let $f = f(x,v,t)$ be the one particle distribution function. Then $f$ solves the following Vlasov-type equation, which is a mesoscopic model:
\bq\label{eq_kin}
\pa_t f + v \cdot \nabla_x f + \nabla_v \cdot (F[f]f) = 0,
\eq
where the force $F[f] = F[f](x,v,t)$ is given by
\[
F[f](x,v,t) = \frac1\e\lt( - \gamma v -  (\nabla_x W \star \rho)(x,t) + \iint_{\om \times \R^d} \phi(x-y)(w-v)f(y,w,t)\,dydw \rt).
\]
Here $\rho$ denotes the local particle density, i.e., 
\[
\rho(x,t) = \int_{\R^d} f(x,v,t)\,dv,
\]
and $\star$ stands for the convolution operator in spatial variable.

Our main hydrodynamic equations \eqref{main_eq} can be obtained by taking care of local moments in velocity on the above kinetic equation \eqref{eq_kin} together with a mono-kinetic distribution for $f$. Indeed, if we define a local particle velocity $u = u(x,t)$ as
\[
u(x,t) = \int_{\R^d} v f(x,v,t)\,dv \bigg / \int_{\R^d} f(x,v,t)\,dv,
\]
then we can easily check from \eqref{eq_kin} that the local density $\rho$ and velocity $u$ satisfy
\begin{align}\label{eq_hydro}
\begin{aligned}
&\pa_t \rho + \nabla_x \cdot (\rho u) =0,\cr
&\e\pa_t (\rho u) + \e\nabla_x \cdot (\rho u \otimes u) + \e\nabla_x \cdot \lt( \int_{\R^d} (v-u) \otimes (v-u) f(x,v,t)\,dv \rt)\cr
&\quad = - \gamma \rho u -(\nabla_x W \star \rho) \rho + \rho \intt \phi(x-y)(u(y)-u(x))\rho(y)\,dy.
\end{aligned}
\end{align}
In order to close the above system, we assume 
\[
f (x,v,t) \simeq \rho(x,t) \delta_{u(x,t)}(v),
\] 
where $\delta$ denotes the Dirac measure. Then the system \eqref{eq_hydro} becomes our main pressureless Euler-type system \eqref{main_eq}. Note that the closure based on the mono-kinetic distribution can be justified by considering an additional force term, for an instance a local velocity alignment force $\nabla_v \cdot ((u-v)f)$, with a singular parameter \cite{CCpre, Cpre, FK19}.

There are several works on the pressureless Euler equations with nonlocal forces. Without the linear damping and the interaction potential $W$ in \eqref{main_eq}, the global existence and the long time behavior of strong solutions are obtained in \cite{HKK15}, see also \cite{CH19} for the normalized communication weight case. In one dimensional case, sharp critical thresholds between a supercritical region with finite-time blow-up and a subcritical region with global-in-time regularity of classical solutions are discussed in \cite{CCTT16}, see also \cite{CCZ16,TT14}. Including the pressure term, we also refer to \cite{CFGS17,CPW20,C19} for the existence of weak and strong solutions. 

In the current work, we are interested in the behavior of solutions $(\rho^\e, \rho^\e u^\e)$ to the system \eqref{main_eq} as $\e \to 0$. At the formal level, it is expected to have that the solutions $(\rho^\e, \rho^\e u^\e)$ converge toward solutions $(\rho, \rho u)$ which solve the following continuity equation with nonlocal velocity fields, which is often called an aggregation equation: 
\bq\label{main_eq2}
\pa_t \rho + \nabla_x \cdot (\rho u) = 0, \quad (x,t) \in \om \times \R_+,
\eq
where
\bq\label{main_eq22}
\rho u = - \frac\rho\gamma(\nabla_x W \star \rho) + \frac\rho\gamma \intt \phi(x-y)(u(y) - u(x))\rho(y)\,dy.
\eq

There are some studies on the large friction limit from Euler-type equations to the aggregation-diffusion equation or Keller--Segel equation \cite{CCT19, CPW20, CG07, LT13, LT17}. Here the main mathematical tool is based on the relative entropy method proposed in \cite{Daf79} to study the weak-strong uniqueness principle. It is worth noticing that in these previous works the pressure term in the Euler equations plays a crucial role in analyzing the large friction limit since the nonlocal interaction terms can be dominated by the relative pressure. However, our main system is the pressureless Euler-type system, thus it is not clear how to estimate the nonlocal interaction forces. To the best of author's knowledge, the large friction limit of pressureless Euler equations with nonlocal interaction forces has not been studied so far. In the current work, we consider two different types of interaction potentials, regular and Coulomb ones. Here the regular interaction potential means that $W$ is bounded and Lipschitz continuous, and the Coulomb one represents that $W$ is given as the fundamental solution of Laplace's equation, i.e., $-\Delta_x W = \delta_0$, where $\delta_0$ denotes the Dirac measure. For the Coulomb interaction potential case, motivated from \cite{CFGS17,LT17}, we use the particular structure of the Poisson equation carefully to estimate the nonlocal interaction forces. For the regular case, even though the interaction potential has a good regularity, it is not clear how to have the benefit from that. In general, this can be controlled by the relative pressure \cite{CCK16, CPW20, KMT15} under suitable regularity assumptions for the interaction potential $W$. In order to resolve the difficulty caused by the absence of the pressure, inspired by recent works \cite{CCpre,Cpre,CYpre,FK19}, we use the $p$-Wasserstein distance with $p \in [1,\infty)$ which is defined by
\[
d_p(\mu, \nu) := \inf_{\gamma \in \Gamma(\mu,\nu)} \lt(\inttt |x-y|^p\, \gamma(dx,dy)\rt)^{1/p},
\]
and for $p =\infty$, which is the limiting case as $p \to \infty$, the $\infty$-Wasserstein distance is defined by
\[
d_\infty(\mu, \nu):=\inf_{\gamma \in \Gamma(\mu,\nu)} \esssup_{(x,y) \in supp(\gamma)}|x-y|
\]
for $\mu,\nu \in \mathcal{P}_p(\om)$, where $\Gamma(\mu,\nu)$ is the set of all probability measures on $\om \times \om$ with first and second marginals $\mu$ and $\nu$, respectively, i.e.,
\[
\iint_{\om \times \om} (\varphi(x) + \psi(y))\gamma(dx,dy) = \intt \varphi(x)\,\mu(dx) +  \intt \psi(y)\,\nu(dy)
\]
for each $\varphi, \psi \in \mc(\om)$. Here $\mathcal{P}_p(\om)$ is the set of probability measures in $\om$ with $p$-th moment bounded. Note that $\mathcal{P}_p(\om)$ is a complete metric space endowed with the $p$-Wasserstein distance, and in particular, $1$-Wasserstein distance is equivalent to the bounded Lipschitz distance in the metric space $\mathcal{P}_1(\om)$. We refer to \cite{AGS08,Vill03} for detailed discussions of various topics related to the Wasserstein distance.

We employ the Wasserstein distance to estimate the term related to the nonlocal interaction force under the regularity assumptions on the interaction potential function $W$ and the communication weight function $\phi$. We also show that the $p$-Wasserstein distance with $p\in[1,2]$ can be also bounded from above by the relative entropy functional for the pressureless Euler equations, see Section \ref{sec_wasser} for more details. 

\begin{remark} 
The large friction limit of the particle system \eqref{par_sys} can be considered. By Tikhonov theorem \cite{Tik52}, under suitable assumptions on the interaction potential $W$, the communication weight function $\phi$, and the initial data, we can derive from \eqref{par_sys} the following system  of ordinary differential equations as $\e \to 0$:
$$\begin{aligned}
\frac{dx_i(t)}{dt} &= v_i(t), \quad i=1,\dots, N, \quad t > 0,\cr
\lt(\gamma + \frac1N\sum_{j=1}^N \phi(x_i(t) - x_j(t)) \rt)v_i(t) &= -\frac1N\sum_{j=1}^N \nabla_x W (x_i(t) - x_j(t)) + \frac1N\sum_{j=1}^N \phi(x_i(t) - x_j(t))v_j(t).
\end{aligned}$$
Note that if we send $N \to \infty$ in the above system, at the formal level, we can derive the continuity equation \eqref{main_eq2}.
\end{remark}

\begin{remark} The large friction limit of the kinetic equation \eqref{eq_kin} with $\phi \equiv 0$ is studied in \cite{FS15,Jab00} by using PDE analysis and the method of characteristics. These results are also extended to the case with the velocity alignment force \cite{FST16}, i.e., the kinetic equation \eqref{eq_kin} with $\gamma = 0$. More recently, the quantitative estimate for the large friction limit is also discussed in \cite{CCpre}. 
\end{remark}

We now introduce several simplified notations that will be used throughout the paper. For a function $f(x)$, $\|f\|_{L^p}$ denotes the usual $L^p(\om)$-norm. We also denote by $C$ a generic positive constant which may differ from line to line. We also drop $x$-dependence of differential operators, i.e., $\nabla f = \nabla_x f$ and $\Delta f = \Delta_x f$. For any nonnegative integer $k$ and $p \in [1,\infty]$, $\W^{k,p}:= \W^{k,p}(\om)$ stands for the $k$-th order $L^p$ Sobolev space. Furthermore, we set $\mc^k(I;\mathcal{B})$ be the set of $k$-times continuously differentiable functions from an interval $I$ to a Banach space $\mathcal{B}$.

The rest of this paper is organized as follows. In Section \ref{sec_res}, we introduce definitions of solutions to the equations \eqref{main_eq} and \eqref{main_eq2}--\eqref{main_eq22}, and we also present our main results on the large friction limits $\e \to 0$ in \eqref{main_eq}. In Section \ref{sec_wasser}, we develop a general theory for the relation between $p$-Wasserstein distance and the relative entropy-type functional. Section \ref{sec_pf} is devoted to provide the details of the large friction limit when the interaction potential $W$ is regular. As mentioned above, in this case, we combine the relative entropy functional and the $p$-Wasserstein distance to have the quantitative estimate between two solutions to the equations \eqref{main_eq} and \eqref{main_eq2}--\eqref{main_eq22}. Finally, in Section \ref{sec_poisson}, we present the details of proof for the Coulomb interaction potential case. 

%%%%%%%%%%%%%%%%%%%%%%%%%%%%%%%%%%%%%%%%%%%%
%
%
%
%
% \section{Main result}
%
%
%
%%%%%%%%%%%%%%%%%%%%%%%%%%%%%%%%%%%%%%%%%%%%
\section{Main results}\label{sec_res}
In this section, we present our main results on the large friction limit from the pressureless Euler equations with nonlocal forces \eqref{main_eq} to the continuity equation \eqref{main_eq2}. For this, we first introduce some notion of solutions to the equations \eqref{main_eq} and \eqref{main_eq2} \cite{CPW20,LT13,LT17}.  

\begin{definition}\label{def_weak}For a given $T \in (0,\infty)$, we say that $(\rho^\e,\rho^\e u^\e)$ is a weak solution to the system \eqref{main_eq} if the following conditions are satisfied:
\begin{itemize}
\item[(i)] $\rho^\e \in \mc([0,T);L^1_+(\om))$ and $\rho^\e |u^\e|^2 \in \mc([0,T);L^1(\om))$,
\item[(ii)] $(\rho^\e, \rho^\e u^\e)$ satisfies the system \eqref{main_eq} in the sense of distributions, and
\item[(iii)] $(\rho^\e, \rho^\e u^\e)$ satisfies the following weak formulation of energy equality:
\begin{align}\label{weak_form1}
\begin{aligned}
&-\frac\e2\int_0^T\intt \rho^\e|u^\e|^2 \psi'(t)\,dxdt + \gamma\int_0^T\intt \rho^\e|u^\e|^2 \psi(t)\,dxdt\cr
&\quad + \frac{1}{2}\int_0^T \inttt \phi(x-y)|u^\e(x) - u^\e(y)|^2 \rho^\e(x) \rho^\e(y)\psi(t)\,dxdydt\cr
&\qquad = \frac\e2\intt( \rho^\e|u^\e|^2) |_{t=0} \psi(0)\,dx - \int_0^T\intt (\nabla W \star \rho^\e)\rho^\e u^\e \psi(t)\,dx dt
\end{aligned}
\end{align}
for any nonnegative function $\psi(t) \in \mc[0,T]\cap \W^{1,\infty}(0,T)$ with $\psi(T) = 0$.
\end{itemize}
Here $L^1_+(\om)$ represents the set of nonnegative $L^1(\om)$ functions.
\end{definition}

\begin{definition}\label{def_weak2}For a given $T \in (0,\infty)$, we say that $(\rho^\e,\rho^\e u^\e)$ is a weak solution to the system \eqref{main_eq} if the following conditions are satisfied:
\begin{itemize}
\item[(i)] $\rho^\e \in \mc([0,T);L^1_+(\om))$, $\nabla W \star \rho^\e \in \mc([0,T);L^2(\om))$ and $\rho^\e |u^\e|^2 \in \mc([0,T);L^1(\om))$,
\item[(ii)] $(\rho^\e, \rho^\e u^\e)$ satisfies the system \eqref{main_eq} in the sense of distributions, and
\item[(iii)] $(\rho^\e, \rho^\e u^\e)$ satisfies the following weak formulation of energy equality:
\begin{align}\label{weak_form1}
\begin{aligned}
&-\frac\e2\int_0^T\intt \rho^\e|u^\e|^2 \psi'(t)\,dxdt + \gamma\int_0^T\intt \rho^\e|u^\e|^2 \psi(t)\,dxdt\cr
&\quad + \frac{1}{2}\int_0^T \inttt \phi(x-y)|u^\e(x) - u^\e(y)|^2 \rho^\e(x) \rho^\e(y)\psi(t)\,dxdydt\cr
&\qquad = \frac\e2\intt( \rho^\e|u^\e|^2) |_{t=0} \psi(0)\,dx - \int_0^T\intt (\nabla W \star \rho^\e)\rho^\e u^\e \psi(t)\,dx dt
\end{aligned}
\end{align}
for any nonnegative function $\psi(t) \in \mc[0,T]\cap \W^{1,\infty}(0,T)$ with $\psi(T) = 0$.
\end{itemize}
\end{definition}

\begin{definition}\label{def_strong}For given $T\in(0,\infty)$ and $p\in[1,2]$, we say that $(\rho,u)$ is a strong solution to the equation \eqref{main_eq2} if the following conditions are satisfied
\begin{itemize}
\item[(i)] $\rho \in \mc([0,T);(L^1_+ \cap \mathcal{P}_p)(\om))$,
\item[(ii)] $u \in L^\infty(0,T;\W^{1,\infty}(\om))$ and $\pa_t u \in L^\infty(\om \times (0,T))$, and
\item[(iii)] $\rho$ satisfies the system \eqref{main_eq2} in the sense of distributions.
\end{itemize}
\end{definition}

\begin{remark} If there is no velocity alignment force, i.e., $\phi \equiv 0$, then the condition (ii) in the Definition \ref{def_strong} can be replaced by
\begin{itemize}
\item[(ii)$^\prime$] $\pa_t u, \nabla u \in L^\infty(\om \times (0,T))$.
\end{itemize}
That is, the boundedness condition on $u$ can be removed. 
\end{remark}

\begin{remark} In the regularity interaction case, i.e., $\nabla W \in \W^{1,\infty}(\om)$, the condition (ii) in the Definition \ref{def_strong} can be removed. On the other hand, in the Coulomb interaction case, the condition (ii) can be replaced by
\begin{itemize}
\item[(ii)$^\prime$] $\rho \in L^\infty(0,T;\W^{1,p}(\om))$ and $\nabla \rho \in L^\infty(0,T;L^2(\om))$
for $d \geq 2$ and $\rho \in L^\infty(0,T;\W^{1,1}(\om))$ for $d=1$.
\end{itemize}
See Remark \ref{comm_u} for details.
\end{remark}

As mentioned in Introduction, the existence of weak solutions for the system \eqref{main_eq} with pressure is established in \cite{CFGS17} based on the methods of convex integration \cite{DS10}. This strategy also works for the pressureless case under more regular assumptions on the interaction potential $W$ and communication weight $\psi$, for instance $W \in \mc^2(\om)$ and $\psi \in \mc^1(\om)$, see \cite[Remark 2.1]{CFGS17} and \cite[Section 6]{CFGS17}, when $d=2$ or $3$. Moreover, when the interaction potential $W$ and the communication weight $\phi$ are bounded and Lipschitz continuous, we can also use a similar argument as in \cite[Theorem 2.4]{FST16} to obtain the global-in-time existence and uniqueness of solutions $(\rho,\rho u)$ to the equation \eqref{main_eq2}--\eqref{main_eq22} in the sense of Definition \ref{def_strong}. In fact, in this case, we only need to have the weak solution $\rho$ to the equations \eqref{main_eq2}--\eqref{main_eq22}. Concerning the condition Definition \ref{def_strong} (ii), see Remark \ref{comm_u} below. 

We next state our first main result showing the convergence of weak solutions $(\rho^\e, \rho^\e u^\e)$ of \eqref{main_eq} to a strong solution $(\rho,\rho u)$ of the equations \eqref{main_eq2}--\eqref{main_eq22} as $\e \to 0$ when the interaction potential $W$ is sufficiently regular.
\begin{theorem}\label{main_thm} Let $T>0$, $p \in [1,2]$, and $d\geq 1$. Let $(\rho^\e,\rho^\e u^\e)$ be a weak solution to the system \eqref{main_eq} in the sense of Definition \ref{def_weak} and $(\rho,u)$ be a strong solution to the equation \eqref{main_eq2} in the sense of Definition \ref{def_strong}. Suppose that the interaction potential $W$ and the communication weight $\phi$ are bounded and Lipschitz continuous. Moreover we assume that 
\[
\|\rho^\e_0\|_{L^1} =1 \quad \mbox{and} \quad \e \intt \rho^\e_0 |u^\e_0|^2\,dx \leq C \qquad \forall \, \e > 0
\]
for some $C>0$ independent of $\e>0$, and the strength of damping $\gamma > 0$ is large enough. Then we have
$$\begin{aligned}
&\int_0^T\intt \rho^\e(x,t) |(u^\e - u)(x,t)|^2\,dxdt + \sup_{0 \leq t \leq T} d_p^2(\rho^\e(\cdot,t) , \rho(\cdot,t))\cr
&\quad \leq C\e\intt \rho^\e_0(x) |(u^\e_0 - u_0)(x)|^2\,dx + Cd_p^2(\rho^\e_0(\cdot) , \rho_0(\cdot)) + C\e^2
\end{aligned}$$
and
$$\begin{aligned}
&\sup_{0 \leq t \leq T}\lt(\intt \rho^\e(x,t) |(u^\e - u)(x,t)|^2\,dx + d_p^2(\rho^\e(\cdot,t) , \rho(\cdot,t))\rt)\cr
&\quad \leq C\intt \rho^\e_0(x) |(u^\e_0 - u_0)(x)|^2\,dx + \frac{C}{\e}d_p^2(\rho^\e_0(\cdot) , \rho_0(\cdot)) + C\e,
\end{aligned}$$
where $C>0$ is independent of $\e > 0$. In particular, if we assume that
\[
\intt \rho^\e_0(x) |(u^\e_0 - u_0)(x)|^2\,dx + d_p(\rho^\e_0(\cdot) , \rho_0(\cdot)) = \mathcal{O}(\e),
\]
then we have
\[
\int_0^T\intt \rho^\e(x,t) |(u^\e - u)(x,t)|^2\,dxdt + \sup_{0 \leq t \leq T} d_p^2(\rho^\e(\cdot,t) , \rho(\cdot,t)) \leq C\e^2
\]
and
\bq\label{est_mr}
\sup_{0 \leq t \leq T}\lt(\intt \rho^\e(x,t) |(u^\e - u)(x,t)|^2\,dx + d_p^2(\rho^\e(\cdot,t) , \rho(\cdot,t))\rt) \leq C\e,
\eq
where $C>0$ is independent of $\e > 0$.
\end{theorem}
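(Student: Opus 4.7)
The plan is to combine a modulated (relative) kinetic--energy estimate for the momentum equation in \eqref{main_eq} with a $p$-Wasserstein stability estimate for the continuity equations \eqref{main_eq2}--\eqref{main_eq22}, and then close the resulting coupled system with Gr\"onwall's inequality. The key quantity is the modulated kinetic energy
\[
\mathcal{E}^\e(t) := \frac{\e}{2}\intt \rho^\e(x,t)\,|(u^\e-u)(x,t)|^2\,dx,
\]
where $u$ is the strong-solution velocity field. Expanding $|u^\e-u|^2 = |u^\e|^2 - 2 u^\e\cdot u + |u|^2$, combining the weak energy identity \eqref{weak_form1} for $(\rho^\e,\rho^\e u^\e)$ with the continuity equation for $\rho^\e$ tested against $|u|^2$ and $u$ (admissible thanks to $u,\nabla u,\pa_t u \in L^\infty$ from Definition \ref{def_strong}), and then subtracting, one should obtain an inequality of the form
\begin{align*}
&\mathcal{E}^\e(t) + \gamma \int_0^t\!\intt \rho^\e|u^\e-u|^2\,dx\,ds \\
&\qquad + \frac12 \int_0^t\!\inttt \phi(x-y)\,|(u^\e-u)(x) - (u^\e-u)(y)|^2\,\rho^\e(x)\rho^\e(y)\,dx\,dy\,ds \\
&\qquad\quad \le \mathcal{E}^\e(0) + \mathcal{R}^\e_{\mathrm{kin}}(t) + \mathcal{R}^\e_{W}(t) + \mathcal{R}^\e_{\phi}(t),
\end{align*}
where $\mathcal{R}^\e_{\mathrm{kin}}$ collects the $\e$-small residuals coming from $\e\pa_t u$ and $\e u\cdot\nabla u$, controlled by $\delta\intt\rho^\e|u^\e-u|^2\,dx + C_\delta\e^2$ using Young's inequality and $\|u\|_{\W^{1,\infty}}$.

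\textbf{Nonlocal forces --- the main obstacle.} Without a pressure, the interaction residuals $\mathcal{R}^\e_W$ and $\mathcal{R}^\e_\phi$ cannot be absorbed into a relative pressure as in \cite{CCT19,CPW20,LT13,LT17}. The idea is to use the Lipschitz regularity of $\nabla W$ and $\phi$ together with the Wasserstein distance: for any optimal $d_p$-coupling $\pi_t \in \Gamma(\rho^\e(\cdot,t),\rho(\cdot,t))$,
\[
(\nabla W\star\rho^\e)(x) - (\nabla W\star\rho)(x) = \inttt \bigl(\nabla W(x-y) - \nabla W(x-z)\bigr)\,\pi_t(dy,dz),
\]
and the Lipschitz bound on $\nabla W$ gives $\|\nabla W\star\rho^\e - \nabla W\star\rho\|_{L^\infty} \le C\,d_p(\rho^\e,\rho)$. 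After adding and subtracting $(\nabla W\star\rho)\rho^\e u$ inside $\mathcal{R}^\e_W$, the residual is dominated by $\intt \rho^\e |u^\e-u|\,d_p(\rho^\e,\rho)\,dx \le \delta\intt \rho^\e |u^\e-u|^2\,dx + C_\delta\,d_p^2(\rho^\e,\rho)$. An analogous symmetrization using the Lipschitz continuity and symmetry of $\phi$ handles $\mathcal{R}^\e_\phi$, at the cost of extra factors of $\|u\|_{L^\infty}$, while preserving the quadratic alignment dissipation on the left. Taking $\gamma$ large enough absorbs all the $\delta$-terms into the damping dissipation.

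\textbf{Wasserstein evolution and closure.} Since $\rho^\e$ and $\rho$ both satisfy continuity equations with respective velocities $u^\e$ and $u$, a standard coupling/characteristic argument based on $\nabla u \in L^\infty$ yields
\[
\frac{d}{dt}\,d_p^2(\rho^\e(\cdot,t),\rho(\cdot,t)) \le C\,\|\nabla u\|_{L^\infty}\,d_p^2(\rho^\e,\rho) + C\intt \rho^\e |u^\e - u|^2\,dx,
\]
where $p \in [1,2]$ and mass conservation are used to pass from a natural $d_p^p$-estimate to $d_p^2$ via H\"older. The bound of $d_p$ by the relative-entropy functional developed in Section \ref{sec_wasser} guarantees the two quantities on the left of the modulated-energy inequality and of the Wasserstein inequality are compatible. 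Adding the two inequalities and absorbing the resulting $C\intt\rho^\e|u^\e-u|^2\,dx$ into the damping dissipation by taking $\gamma$ large closes a Gr\"onwall loop on $\mathcal{E}^\e(t) + \tfrac12 d_p^2(\rho^\e,\rho)$, which yields the first displayed estimate upon time integration. The second estimate is obtained by dividing the same closed inequality by $\e/2$, which converts $\mathcal{E}^\e$ into $\intt\rho^\e|u^\e-u|^2\,dx$ on the left and produces the characteristic $\e^{-1}d_p^2(\rho_0^\e,\rho_0)$ on the right. The well-prepared data assumption then reduces both bounds to the announced $\mathcal{O}(\e^2)$ and $\mathcal{O}(\e)$ rates.

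\textbf{Expected main difficulty.} The principal technical challenge is justifying the modulated-energy identity at the level of the weak solutions $(\rho^\e,\rho^\e u^\e)$ of \eqref{main_eq}: because $u$ is only $L^\infty$ in time, products such as $\rho^\e u^\e \cdot u$ must be manipulated via the distributional formulations of \eqref{main_eq} and \eqref{main_eq2}--\eqref{main_eq22} rather than by naive chain-rule differentiation, and the $\e$-factor in front of each time derivative must be tracked with precision so that the two estimates emerge with the correct exponent of $\e$. The alignment residual $\mathcal{R}^\e_\phi$, being quadratic in $\rho^\e$ and mixing $u^\e$ with $u$ nonlinearly, requires a careful symmetrization before one can split off a dissipative piece from the $d_p$-controlled residual.
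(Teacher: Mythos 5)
Your proposal follows essentially the same route as the paper's proof: it derives a relative/modulated kinetic-energy inequality from the weak energy identity \eqref{weak_form1} tested against $|u|^2/2$ and $u$, controls the attraction/repulsion and alignment residuals by the Lipschitz regularity of $\nabla W$ and $\phi$ together with $\|\nabla W\star(\rho^\e-\rho)\|_{L^\infty}\lesssim d_p(\rho^\e,\rho)$ (identical to the paper's estimates of $I_2$--$I_5$), invokes the Wasserstein--relative-entropy bridge of Section~\ref{sec_wasser}, absorbs the $\intt\rho^\e|u^\e-u|^2$ terms into the friction dissipation for $\gamma$ large, and closes with Gr\"onwall. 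The only cosmetic difference is that you phrase the Wasserstein control as a differential inequality $\frac{d}{dt}d_p^2 \le C\|\nabla u\|_{L^\infty}d_p^2 + C\intt\rho^\e|u^\e-u|^2\,dx$, whereas the paper instead establishes the integrated bound directly (Proposition~\ref{prop_gd}) via a characteristic/superposition argument; you do reference the Section~\ref{sec_wasser} result so this is harmless, though the differential form as written would require a separate justification at this regularity level. Likewise, "dividing the closed inequality by $\e/2$" is a heuristic for reading off the two announced estimates; the paper in fact peels off the various terms of the combined left-hand side (the $\frac{\gamma-C}{\e}\int\!\!\int\rho^\e|u^\e-u|^2$ piece gives the first bound, the $\frac12\intt\rho^\e|u^\e-u|^2$ piece plus a second application of \eqref{mkr_re} gives the second), but the effect is the same.
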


Our second result is on the Coulomb interaction case, i.e., the interaction potential $W$ satisfies $-\Delta W = \delta_0$.

\begin{theorem}\label{main_thm2} Let $T>0$ and $d\geq 1$. Let $(\rho^\e,\rho^\e u^\e)$ be a weak solution to the system \eqref{main_eq} in the sense of Definition \ref{def_weak2} and $(\rho,u)$ be a strong solution to the equations \eqref{main_eq2}--\eqref{main_eq22} in the sense of Definition \ref{def_strong}. Suppose that the communication weight $\phi$ are bounded and Lipschitz continuous and the interaction potential $W$ satisfies $-\Delta W = \delta_0$. 
Moreover we assume that 
\[
\|\rho^\e_0\|_{L^1} =1, \quad \e \intt \rho^\e_0 |u^\e_0|^2\,dx \leq C,  \quad \mbox{and} \quad \intt (W \star \rho^\e_0) \rho^\e_0\,dx \leq C \qquad \forall \, \e > 0
\]
for some $C>0$ independent of $\e>0$, and the strength of damping $\gamma > 0$ is large enough. In case $d=2$, we further suppose
\[
\intt \rho^\e_0 |x|^2\,dx \leq C \qquad \forall \, \e > 0
\]
for some $C>0$ independent of $\e>0$. Then we have
$$\begin{aligned}
&\int_0^T \intt \rho^\e(x,t) |(u^\e - u)(x,t)|^2\,dx dt +\sup_{0 \leq t \leq T}\intt |\nabla W \star (\rho - \rho^\e)(x,t)|^2\,dx \cr
&\quad \leq C\e\intt \rho^\e_0(x) |(u^\e_0 - u_0)(x)|^2\,dx +  C\intt |\nabla W \star (\rho_0 - \rho^\e_0)(x)|^2\,dx + C\e^2
\end{aligned}$$
and
$$\begin{aligned}
&\sup_{0 \leq t \leq T}\lt(\intt \rho^\e(x,t) |(u^\e - u)(x,t)|^2\,dx + \frac{1}{\e}\intt |\nabla W \star (\rho - \rho^\e)(x,t)|^2\,dx\rt) \cr
&\quad \leq C\intt \rho^\e_0(x) |(u^\e_0 - u_0)(x)|^2\,dx +  \frac{C}{\e}\intt |\nabla W \star (\rho_0 - \rho^\e_0)(x)|^2\,dx + C\e,
\end{aligned}$$
where $C>0$ is independent of $\e>0$. In particular, if we assume that
\[
\intt \rho^\e_0(x) |(u^\e_0 - u_0)(x)|^2\,dx = \mathcal{O}(\e)
\]
and
\[
\intt |\nabla W \star (\rho_0 - \rho^\e_0)(x)|^2\,dx = \mathcal{O}(\e^2),
\]
then we have
\[
\int_0^T \intt \rho^\e(x,t) |(u^\e - u)(x,t)|^2\,dx dt +\sup_{0 \leq t \leq T}\intt |\nabla W \star (\rho - \rho^\e)(x,t)|^2\,dx  \leq C\e^2
\]
and
\[%\bq\label{est_mr2}
\sup_{0 \leq t \leq T}\lt(\intt \rho^\e(x,t) |(u^\e - u)(x,t)|^2\,dx + \frac{1}{\e}\intt |\nabla W \star (\rho - \rho^\e)(x,t)|^2\,dx\rt)  \leq C\e.
\]%\eq
\end{theorem}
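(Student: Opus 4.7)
The plan is to run a modulated-energy argument in which the standard relative entropy of the pressureless Euler system is augmented by the Coulomb energy of the density defect. Following the strategy of \cite{CFGS17,LT17}, I would work with
\begin{equation*}
\me(t) := \tfrac{\e}{2}\intt \rho^\e |u^\e - u|^2\,dx + \tfrac{1}{2}\intt |\nabla W \star (\rho - \rho^\e)|^2\,dx,
\end{equation*}
the second term being the natural replacement, in the Coulomb regime, for the relative pressure that controlled the nonlocal interaction in the isentropic works \cite{CCT19, CPW20, LT17}. Because $\nabla W$ is singular, no pointwise bound on $\nabla W \star(\rho-\rho^\e)$ is available, so the estimate must exploit the Poisson structure $\rho-\rho^\e = -\Delta(W \star(\rho-\rho^\e))$ together with an exact cancellation between a cross term arising in the kinetic piece and a transport-type term arising in the Coulomb piece.

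First I would differentiate the kinetic component, using the weak energy identity from Definition \ref{def_weak2}(iii) for $(\rho^\e, \rho^\e u^\e)$ together with the limit equations \eqref{main_eq2}--\eqref{main_eq22} to eliminate $\e\pa_t u$ and to replace the friction term $\gamma\rho u$. The damping produces the dissipation $\gamma \int \rho^\e |u^\e-u|^2\,dx$; the transport and $\e\pa_t u$ contributions are $\mathcal{O}(\int \rho^\e|u^\e - u|^2\,dx) + \mathcal{O}(\e^2)$ after Cauchy--Schwarz and the regularity of $u$ from Definition \ref{def_strong}; and the alignment cross terms are handled exactly as in the proof of Theorem \ref{main_thm} using $\phi \in \W^{1,\infty}$ and the symmetry $\phi(-x)=\phi(x)$. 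The only piece that does not close on its own is
\begin{equation*}
J := \intt \rho^\e (u^\e - u) \cdot \nabla W \star (\rho - \rho^\e)\,dx,
\end{equation*}
which is precisely the term that the Coulomb energy is designed to absorb.

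Next I would differentiate the Coulomb component. Setting $\psi^\e := W \star (\rho - \rho^\e)$ and using the two continuity equations together with $-\Delta \psi^\e = \rho-\rho^\e$, an integration by parts yields
\begin{equation*}
\tfrac{1}{2}\tfrac{d}{dt}\intt |\nabla \psi^\e|^2\,dx = \intt (\rho u - \rho^\e u^\e) \cdot \nabla \psi^\e\,dx.
\end{equation*}
Decomposing $\rho u - \rho^\e u^\e = (\rho - \rho^\e)u + \rho^\e (u-u^\e)$, the second piece is exactly $-J$ and cancels the kinetic contribution. The first piece, $\int (\rho-\rho^\e) u \cdot \nabla \psi^\e\,dx$, is rewritten using $\rho-\rho^\e=-\Delta\psi^\e$ and integration by parts into
\begin{equation*}
\intt (\pa_i u_j)(\pa_i \psi^\e)(\pa_j \psi^\e)\,dx - \tfrac{1}{2}\intt (\nabla\cdot u)|\nabla\psi^\e|^2\,dx,
\end{equation*}
which is bounded by $C\|\nabla u\|_{L^\infty}\int|\nabla\psi^\e|^2\,dx$ and is thus absorbed by the Coulomb part of $\me$. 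In dimension $d=2$, the uniform second-moment bound on $\rho_0^\e$ enters only to make $W \star \rho^\e$ well defined for the logarithmic kernel.

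Combining the two computations yields
\begin{equation*}
\tfrac{d}{dt}\me(t) + \tfrac{\gamma}{2}\intt \rho^\e|u^\e-u|^2\,dx \leq C\me(t) + C\e^2,
\end{equation*}
valid once $\gamma$ is large enough to absorb the $\mathcal{O}(\int \rho^\e|u^\e-u|^2\,dx)$ remainders. A Gr\"onwall argument then produces the first displayed estimate, with the dissipation supplying the time-integrated $\int\rho^\e |u^\e-u|^2$ bound. The second, sharper estimate follows by dividing through by $\e$: since $\me(t)\lesssim \me(0)+\e^2$, one obtains $\int \rho^\e|u^\e-u|^2\,dx + \e^{-1}\int |\nabla\psi^\e|^2\,dx \lesssim \int \rho^\e_0 |u^\e_0 - u_0|^2\,dx + \e^{-1}\int |\nabla W\star(\rho_0 - \rho_0^\e)|^2\,dx + \e$, exactly as stated. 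The main obstacle throughout is the singular interaction term $J$; once the Poisson-structure cancellation above is identified, the remainder is a careful but routine relative-entropy bookkeeping parallel to the regular case of Theorem \ref{main_thm}.
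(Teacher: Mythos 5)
Your proposal is correct and follows essentially the same strategy as the paper: you augment the pressureless relative entropy with the Coulomb energy $\tfrac12\int |\nabla W\star(\rho-\rho^\e)|^2$, use the identity $\tfrac12\tfrac{d}{dt}\int|\nabla W\star(\rho-\rho^\e)|^2 = \int \nabla W\star(\rho-\rho^\e)\cdot(\rho u-\rho^\e u^\e)\,dx$ (the paper's Lemma 5.1) to cancel the singular cross term $J$ via the decomposition $\rho u-\rho^\e u^\e = (\rho-\rho^\e)u - \rho^\e(u^\e-u)$, rewrite the remaining $\int(\rho-\rho^\e)u\cdot\nabla W\star(\rho-\rho^\e)$ via $\rho-\rho^\e=-\Delta(W\star(\rho-\rho^\e))$ and integration by parts into a $\|\nabla u\|_{L^\infty}\int|\nabla W\star(\rho-\rho^\e)|^2$ bound, and close with Gr\"onwall. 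The only cosmetic differences are that the paper organizes this as a correction to the $I_3$ term in a single general relative-entropy inequality (shared with the regular case) and runs a two-pass Gr\"onwall (first on $\int|\nabla W\star(\rho-\rho^\e)|^2$ alone, then reinserting), while you Gr\"onwall the combined modulated energy once; both close identically.
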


\begin{remark}The condition $\|\rho^\e_0\|_{L^1} =1$ for all $\e>0$ in the above theorems can be replaced by $\|\rho^\e_0\|_{L^1} \leq M$ with $M>0$ independent of $\e>0$.
\end{remark}

\begin{remark} Inspired by \cite{CPW20,LT13,LT17}, even for the Coulomb interaction case, we provide quantitative error estimates in Theorem \ref{main_thm2} by using the weak formulation of energy equality \eqref{weak_form1}. However, we can also directly estimate by using the classical solutions to the system \eqref{main_eq}. We refer to \cite{CCJpre, CJpre} for the strong solvability of pressureless Euler-type equations.
\end{remark}

\begin{remark}The upper bound estimate of relative entropy in Theorem \ref{main_thm2} also provides the bound estimate in $p$-Wasserstein distance with $p \in [1,2]$, see Section \ref{sec_wasser} for details, or simply see \eqref{mkr_re}. Moreover, the bound estimate of $\|\nabla W \star (\rho - \rho^\e)(x,t)\|_{L^2}$ also gives the bound of $\|\rho - \rho^\e\|_{H^{-1}}$. Indeed, for any $\psi\in \dot{H}^1(\T^d)$ with $\|\psi\|_{\dot{H}^1} \leq1$, by using the integration by parts and H\"older's inequality, we find
$$\begin{aligned}
\lt|\intt \psi(x) (\rho - \rho^\e)(x)\,dx\rt| &= \lt|\intt \psi(x) (\Delta W \star (\rho - \rho^\e))(x)\,dx\rt| \cr
&= \lt|\intt \nabla \psi(x) \cdot(\nabla W \star (\rho - \rho^\e))(x)\,dx\rt|\cr
&\leq \intt |\nabla \psi(x)| |(\nabla W \star (\rho - \rho^\e))(x)|\,dx\cr
&\leq \|\nabla W \star (\rho - \rho^\e)(x,t)\|_{L^2},
\end{aligned}$$
that is, 
\[
\|\rho - \rho^\e\|_{H^{-1}} \leq \|\nabla W \star (\rho - \rho^\e)(x,t)\|_{L^2}.
\]
\end{remark}

\begin{remark} The estimate \eqref{est_mr} in Theorem \ref{main_thm} gives
\[
\sup_{0 \leq t \leq T}d_{BL}((\rho^\e u^\e)(\cdot,t),(\rho u)(\cdot,t)) \to 0
\]
as $\e \to 0$. Here $d_{BL}$ denotes the bounded Lipschitz distance. Indeed, for any $\varphi \in (L^\infty \cap Lip)(\om)$ we estimate
$$\begin{aligned}
\lt|\intt \lt((\rho^\e u^\e)(x) - (\rho u)(x)\rt) \varphi(x)\,dx\rt|&= \lt|\intt (\rho^\e(x) (u^\e - u)(x) \varphi(x)\,dx + \intt(\rho^\e - \rho)(x)u(x) \varphi(x)\,dx\rt|\cr
& \leq \|\varphi\|_{L^\infty} \lt(\intt \rho^\e(x)\,dx\rt)^{1/2}\lt(\intt \rho^\e(x)|u^\e(x)-u(x)|^2\,dx\rt)^{1/2} \cr
&\quad + \|u \varphi\|_{L^\infty \cap Lip} \,d_1(\rho^\e(\cdot),\rho(\cdot))\cr
&\leq C\lt(\intt \rho^\e(x)|u^\e(x)-u(x)|^2\,dx\rt)^{1/2} + Cd_1(\rho^\e(\cdot),\rho(\cdot))\cr
&\leq C\sqrt\e,
\end{aligned}$$
where $C>0$ is independent of $\e>0$. This together with Theorem \ref{main_thm} provides the limit from \eqref{main_eq} to \eqref{main_eq2}.
\end{remark}

\begin{remark} In the periodic domain case, $\om = \T^d$, the solution $\varphi$ to the following Poisson equation 
\bq\label{poi_eq}
-\Delta \varphi = \rho
\eq
cannot be expressed as $\nabla W \star \rho$ for some potential function $W$ with $-\Delta W = \delta_0$. Thus, in the case $\om = \T^d$ the term $-\nabla W \star \rho$ can be replaced with $\nabla \varphi$, where $\varphi$ solves \eqref{poi_eq}. According to this change, we have the results in Theorem \ref{main_thm2} with the substitution $\|\nabla (\varphi - \varphi^\e)\|_{L^2}^2$ for $\|\nabla W \star (\rho - \rho^\e)(x,t)\|_{L^2}^2$, where $\varphi^\e$ is the solution to \eqref{poi_eq} with $\rho^\e$. However, in order to simply the presentation of our work, not to write the pressureless Euler-Poisson equations in the periodic domain, we only consider the form of system \eqref{main_eq}.
\end{remark}

\begin{remark}\label{comm_u} Let us comment on the regularity assumptions on $u$ appeared in Definition \ref{def_strong} (ii). In fact, we show that $\|u\|_{L^\infty(0,T;\W^{1,\infty})}$ and $\|\pa_t u\|_{L^\infty}$ can be bounded from above by some constant which depends only on $\|\nabla W \star \rho\|_{L^\infty(0,T;\W^{1,\infty})}$, $\|\phi\|_{\W^{1,\infty}}$, $\|\rho\|_{L^\infty(0,T;L^1)}$, and $\gamma > 0$ when the strength of damping $\gamma > 0$ is sufficiently large. Since those estimates are rather lengthy and technical, for the smooth flow of reading we leave them in Appendix \ref{app_a}. Note that if the interaction potential $W$ satisfies $\nabla W \in \W^{1,\infty}(\om)$, then it readily follows $\|\nabla W \star \rho\|_{L^\infty(0,T;\W^{1,\infty})} \leq \|\nabla W\|_{\W^{1,\infty}}\|\rho\|_{L^\infty(0,T;L^1)}$. On the other hand, for the Coulomb interaction potential, if $d\geq 2$, we estimate
$$\begin{aligned}
\lt|\intt \nabla W (x-y) \rho(y)\,dy \rt| &\leq C\lt(\int_{|x-y| \geq 1} + \int_{|x-y| \leq 1}\rt) \frac{1}{|x-y|^{d-1}} \rho(y)\,dy\cr
&\leq C\lt(\|\rho\|_{L^1}+ \|\rho\|_{L^p}\rt)
\end{aligned}$$
and
$$\begin{aligned}
\lt|\intt \nabla W (x-y) \nabla \rho(y)\,dy \rt| &\leq C\lt(\int_{|x-y| \geq 1} + \int_{|x-y| \leq 1}\rt) \frac{1}{|x-y|^{d-1}} |\nabla \rho(y)|\,dy\cr
&\leq C\int_{|x-y| \geq 1} \frac{1}{|x-y|^{\frac{d-1}{2}}} |\nabla \rho(y)|\,dy + \int_{|x-y| \leq 1}\frac{1}{|x-y|^{d-1}} |\nabla \rho(y)|\,dy\cr
&\leq C\lt(\|\nabla \rho\|_{L^2}+ \|\nabla \rho\|_{L^p}\rt)
\end{aligned}$$
for some $p > d$. This yields
\[
\|\nabla W \star \rho\|_{L^\infty(0,T;\W^{1,\infty})}  \leq C\lt(\|\rho\|_{L^\infty(0,T;L^1)} + \|\nabla \rho\|_{L^\infty(0,T;L^2)} + \|\rho\|_{L^\infty(0,T;\W^{1,p})}\rt)
\]
when $W$ satisfies $-\Delta W = \delta_0$ with $d\geq 2$. In the case $d = 1$, $\|\nabla W\|_{L^\infty} \leq 1$ and thus 
\[
\|\nabla W \star \rho\|_{L^\infty(0,T;\W^{1,\infty})} \leq \|\rho\|_{L^\infty(0,T;\W^{1,1})}.
\]
\end{remark}

%%%%%%%%%%%%%%%%%%%%%%%%%%%%%%%%%%%%
%
%
%
% \section{$p$-Wasserstein distance and relative entropy functional}
%
%
%
%
%%%%%%%%%%%%%%%%%%%%%%%%%%%%%%%%%%%%
\section{$p$-Wasserstein distance and relative entropy functional}\label{sec_wasser}

In this section, we provide some relation between the $p$-Wasserstein distance and the relative entropy-type functional. In particular, if $p \in [1,2]$, $p$-Wasserstein distance can be bounded from above by our relative entropy functional. 

We state our main result of this section.

\begin{proposition}\label{prop_gd} Let $T>0$, $p \in[1,\infty]$, and $\bar \rho : [0,T] \to \mathcal{P}(\om)$ be a narrowly continuous solution of 
\[
\pa_t \bar\rho + \nabla \cdot (\bar\rho \bar u) = 0,
\] 
that is, $\bar\rho$ is continuous in the duality with continuous bounded functions, for a Borel vector field $\bar u$ satisfying
\bq\label{est_p1}
\int_0^T\intt |\bar u(x,t)|^p\bar\rho(x,t)\,dx dt < \infty.
\eq
Let $\rho \in \mc([0,T];\mathcal{P}_p(\om))$ be a solution of the following continuity equation:
\bq\label{conti_brho}
\pa_t \rho + \nabla \cdot (\rho u) = 0
\eq
with the velocity fields $u \in L^\infty(0,T; \dot{\W}^{1,\infty}(\om))$. Then there exists a positive constant $C$ depending only on $T$ such that for all $t \in [0,T]$
$$\begin{aligned}
&d_p(\bar \rho(\cdot,t), \rho(\cdot,t))   \leq Ce^{C\|\nabla u\|_{L^\infty}}\lt( d_p(\bar\rho(0), \rho(0)) +\lt(\int_0^t \intt |\bar u(x,s) - u(x,s)|^p \bar \rho(x,s)\,dxds\rt)^{1/p}\rt)
\end{aligned}$$
for $p \in [1,\infty)$, and
$$\begin{aligned}
&d_\infty(\bar \rho(\cdot,t), \rho(\cdot,t))  \leq Ce^{C\|\nabla u\|_{L^\infty}}\lt( d_\infty(\bar\rho(0), \rho(0)) + \sup_{s \in [0,T]}\esssup_{x \in supp(\bar \rho(s))}  |\bar u(x,s) - u(x,s)| \rt).
\end{aligned}$$
In particular, if $p \in [1,2]$, we have
$$\begin{aligned}
&d_p(\bar \rho(\cdot,t), \rho(\cdot,t))  \leq Ce^{C\|\nabla u\|_{L^\infty}}\lt( d_p(\bar\rho(0), \rho(0)) +\lt(\int_0^t \intt |\bar u(x,s) - u(x,s)|^2\bar \rho(x,s)\,dxds\rt)^{1/2}\rt),
\end{aligned}$$
where $C>0$ depends only on $T$.
\end{proposition}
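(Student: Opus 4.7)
The plan is to construct, for each $t \in [0,T]$, an admissible coupling $\gamma_t \in \Gamma(\bar\rho(t),\rho(t))$ by transporting an optimal initial coupling along two different ``flows'': the classical ODE flow of the Lipschitz field $u$ for the $\rho$-marginal, and a Lagrangian representation of $\bar\rho$ (obtained from the superposition principle) for the $\bar\rho$-marginal. Quantitative comparison of these two lifts via Gronwall's inequality, followed by Minkowski's integral inequality in $L^p$, will yield the stated estimate.

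More precisely, since $u \in L^\infty(0,T;\dot{\W}^{1,\infty}(\om))$, the ODE $\pa_s X(s,x) = u(X(s,x),s)$, $X(0,x) = x$, generates a unique bi-Lipschitz flow $X(s,\cdot)$ with $\rho(s) = X(s,\cdot)_\#\rho(0)$. On the other hand, by the narrow continuity of $\bar\rho$, the continuity equation \eqref{conti_brho} (with $\bar u$ in place of $u$), and the integrability assumption \eqref{est_p1}, the Ambrosio--Gigli--Savar\'e superposition principle produces a probability measure $\eta$ on $\mc([0,T];\om)$, concentrated on absolutely continuous curves $\sigma$ with $\dot\sigma(s) = \bar u(\sigma(s),s)$ for a.e.\ $s$, and satisfying $(e_s)_\#\eta = \bar\rho(s)$ for every $s \in [0,T]$, where $e_s$ is evaluation at time $s$. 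Letting $\gamma_0$ be an optimal plan for $d_p(\bar\rho(0),\rho(0))$ and disintegrating $\eta$ with respect to $e_0$, I would glue $\eta$ with $\rho(0)$ to produce $\tilde\eta$ on $\mc([0,T];\om) \times \om$ with $(e_0,\mathrm{id})_\#\tilde\eta = \gamma_0$; then $\gamma_t := (e_t,X(t,\cdot))_\#\tilde\eta$ is an admissible coupling for $d_p(\bar\rho(t),\rho(t))$.

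The pointwise identity
$$
\sigma(t) - X(t,x) = (\sigma(0)-x) + \int_0^t \bigl[\bar u(\sigma(s),s) - u(\sigma(s),s)\bigr] ds + \int_0^t \bigl[u(\sigma(s),s) - u(X(s,x),s)\bigr] ds
$$
combined with the Lipschitz bound on $u$ in space yields, by Gronwall,
$$
|\sigma(t)-X(t,x)| \leq e^{T\|\nabla u\|_{L^\infty}}\Bigl(|\sigma(0)-x| + \int_0^t |\bar u(\sigma(s),s)-u(\sigma(s),s)|\,ds\Bigr).
$$
Taking $L^p(\tilde\eta)$-norms and applying Minkowski's integral inequality to the time integral gives
$$
d_p(\bar\rho(t),\rho(t)) \leq e^{T\|\nabla u\|_{L^\infty}}\Bigl( d_p(\bar\rho(0),\rho(0)) + \int_0^t \|\bar u(\cdot,s)-u(\cdot,s)\|_{L^p(\bar\rho(s))}\,ds\Bigr),
$$
and one last H\"older in time (using that $\bar\rho(s)$ is a probability measure) produces the stated bound for $p \in [1,\infty)$. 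For the $p = \infty$ case, the identical computation replaces every integral by an essential supremum, supported on the image of $\mathrm{supp}(\bar\rho(s))$ under the Lagrangian flow. For $p \in [1,2]$, Jensen's inequality applied pointwise in $s$ with the probability measure $\bar\rho(s)\,dx$ gives $\|\cdot\|_{L^p(\bar\rho(s))} \leq \|\cdot\|_{L^2(\bar\rho(s))}$, and a further time-H\"older turns the resulting $L^1_t L^2$ norm into $L^2_{t,x}(\bar\rho\,dx\,ds)$, picking up a $T^{1/2}$ absorbed into $C$.

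The main obstacle is the rigorous construction of $\tilde\eta$ in Step 2: the field $\bar u$ is only Borel and \eqref{est_p1} is the minimal hypothesis ensuring the superposition principle applies, so one must invoke the measurable-selection/disintegration machinery carefully to glue the trajectory measure $\eta$ to $\rho(0)$ along the prescribed optimal plan $\gamma_0$. Once this representation is in place, the rest of the argument is essentially a Gronwall estimate at the level of paths followed by standard integral inequalities.
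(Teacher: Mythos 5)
Your proposal is correct and takes essentially the same approach as the paper: the ODE flow of the Lipschitz field $u$, the Ambrosio--Gigli--Savar\'e superposition principle for $\bar\rho$, a gluing construction along the initial optimal plan, and a pointwise Gr\"onwall estimate on trajectories followed by Minkowski/H\"older. The only organizational difference is that you build a single coupling $(e_t, X(t,\cdot))_\#\tilde\eta$ and run one Gr\"onwall estimate that simultaneously tracks the initial separation $|\sigma(0)-x|$ and the velocity mismatch, whereas the paper introduces the intermediate measure $\hat\rho(t)=X(t;0,\cdot)\#\bar\rho(0)$, estimates $d_p(\rho,\hat\rho)$ and $d_p(\bar\rho,\hat\rho)$ separately, and then uses the triangle inequality; this is a cosmetic reorganization, not a different method.
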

\begin{proof} Since the proof is rather lengthy, we divide it into three steps for the sake of the reader. 

\begin{itemize}
\item In {\bf Step A}, we define the forward characteristics $X(t) := X(t;0,x)$ associated to the continuity equation \eqref{conti_brho}, that is, $X$ solves the following differential equations:
\bq\label{eq_char2}
\pa_t X(t) =  u(X(t),t) 
\eq
with the initial data $X(0) = x \in \om$. By using the above characteristic $X$, we introduce a density $\hat \rho$ which is determined by the push-forward of $\bar \rho(0)$ through the flow map $X$. Then we show 
\[
d_p(\rho(\cdot,t), \hat\rho(\cdot,t))  \leq  e^{\|\nabla u\|_{L^\infty} T}d_p(\rho(0), \bar \rho(0)).  
\]

\item In {\bf Step B}, we provide the quantitative bound for the error between $\bar \rho(t)$ and $\hat \rho(t)$ in the $p$-Wasserstein distance. More precisely, we show
\[
{} \hspace{1.5cm} d_p(\bar \rho(\cdot,t), \hat\rho(\cdot,t)) \leq Ct^{1-1/p}e^{C\|\nabla u\|_{L^\infty}}\lt(\int_0^t \intt |\bar u(x,s) - u(x,s)|^p \bar\rho(x,s)\,dxds\rt)^{1/p},
\]
where $C>0$ depends only on $T$.  \newline
 
\item In {\bf Step C}, we combine the estimates in the previous steps to conclude our desired results. \newline
\end{itemize}

{\bf Step A.-}  We first notice that the characteristic equations \eqref{eq_char2} are well-defined on the time interval $[0,T]$ since $u$ is bounded and Lipschitz continuous on $[0,T]$. To be more specific, there exists a unique solution $\rho$, which is determined as the push-forward of its initial density $\rho(0)$ through the flow maps $X$, i.e.,  $\rho(t) = X(t;0,\cdot) \# \rho(0)$. Here $\cdot \,\# \,\cdot $ stands for the push-forward of a probability measure by a measurable map, more precisely, $\nu = \mt \# \mu$ for probability measure $\mu$ and measurable map $\mt$ implies
\[
\intt \varphi(y) \,d\nu(y) = \intt \varphi(\mt(x)) \,d\mu(x)
\]
for all $\varphi \in \mc_b(\om)$. Moreover, by using the regularity of $u$, we can estimate the Lipschitz continuity of $X$ in $x$ as 
\begin{align*}
|X(t;0,x) - X(t;0,y)| &\leq |x-y| + \int_0^t |u(X(s;0,x)) - u(X(s;0,y))|\,ds\cr
&\leq |x-y| + \|\nabla u\|_{L^\infty}\int_0^t |X(s;0,x) - X(s;0,y)|\,ds.
\end{align*}
This together with applying Gr\"onwall's lemma gives
\bq\label{est_xlip}
|X(t;0,x) - X(t;0,y)| \leq  e^{\|\nabla u\|_{L^\infty}T}|x-y|.
\eq
This shows that the characteristic $X(t;0,x)$ is Lipschitz in $x$ with the Lipschitz constant $e^{\|\nabla u\|_{L^\infty}T}$. Let us now consider the density $\hat \rho$ which is given as the push-forward of $\bar \rho(0)$ through the flow map $X$, i.e., $\hat\rho(t) = X(t;0,x) \# \bar \rho(0)$. We then choose an optimal transport map for $d_p$ denoted by $\mt_0(x)$ between $\rho(0)$ and $\bar\rho(0)$ such that $\rho(0) = \mt_0 \# \bar\rho(0)$. Then since $\rho = X \# \rho(0)$ and $\hat \rho = X \# \bar \rho(0)$, we find
\[
d_p^p(\rho(\cdot,t), \hat\rho(\cdot,t)) \leq \intt |X(t;0,x) - X(t;0,\mt_0(x))|^p  \bar\rho(x,0)\,dx.
\]
Then this together with the Lipschitz estimate of $X$ appeared in \eqref{est_xlip} asserts
 \[
d_p^p(\rho(\cdot,t), \hat\rho(\cdot,t)) \leq  e^{p\|\nabla  u\|_{L^\infty} T}\intt |x - \mt_0(x)|^p  \rho(x,0)\,dx = e^{p\|\nabla u\|_{L^\infty} T}d_p^p(\rho(0), \bar \rho(0)),
 \]
 that is,
 \[
d_p(\rho(\cdot,t), \hat\rho(\cdot,t))  \leq  e^{\|\nabla u\|_{L^\infty} T}d_p(\rho(0), \bar \rho(0)).
 \]
 
{\bf Step B.-} It follows from \cite[Theorem 8.2.1]{AGS08}, see also \cite[Proposition 3.3]{FK19}, that
there exists a probability measure $\eta$ on $\Xi_T \times \om$ satisfying the following properties:
\begin{itemize}
\item[(i)] $\eta$ is concentrated on the set of pairs $(\xi,x)$ such that $\xi$ is an absolutely continuous curve satisfying
\bq\label{eq_gam}
\dot\xi(t) = \bar u(\xi(t),t)
\eq
for almost everywhere $t \in (0,T)$ with $\xi(0) = x \in \om$.
\item[(ii)] $\bar\rho$ satisfies
\bq\label{eq_gam2}
\intt \varphi(x)\bar\rho\,dx = \iint_{\Xi_T \times \T^d}\varphi(\xi(t))\,d\eta(\xi,x)
\eq
for all $\varphi \in \mc_b(\om)$, $t \in [0,T]$.
\end{itemize}
Then we use the disintegration theorem of measures (see \cite{AGS08} for instance) to write 
\[
d\eta(\xi,x) = \eta_x(d\xi) \otimes \bar\rho(x,0)\,dx,
\]
where $\{\eta_x\}_{x \in \om}$ is a family of probability measures on $\Xi_T$ concentrated on solutions of \eqref{eq_gam}. We then introduce a measure $\nu$ on $\Xi_T \times \Xi_T \times \om$ defined by
\[
d\nu(\xi, x, \sigma) = \eta_x(d\xi) \otimes \delta_{X(\cdot;0,x)}(d\sigma) \otimes \bar \rho(x,0)\,dx.
\]
We also introduce an evaluation map $E_t : \Xi_T  \times \Xi_T \times \om \to \om \times \om$ defined as $E_t(\xi, \sigma, x) = (\xi(t), \sigma(t))$. Then we readily show that measure $\pi_t:= (E_t)\# \nu$ on $\om \times \om$ has marginals $\bar\rho(x,t)\,dx$ and $\hat\rho(y,t)\,dy$ for $t \in [0,T]$, see \eqref{eq_gam2}. This implies
\begin{align}\label{est_rho2}
\begin{aligned}
d_p^p(\bar\rho(\cdot,t), \hat\rho(\cdot,t)) &\leq \inttt |x-y|^p\,d\pi_t(x,y)\cr
&=\iiint_{\Xi_T \times \Xi_T \times \om} |\sigma(t) - \xi(t) |^p \,d\nu(\xi, \sigma, x) \cr
&= \iint_{\Xi_T \times \om} |X(t;0,x) - \xi(t)|^p \,d\eta(\xi,x).
\end{aligned}
\end{align}
In order to estimate the right hand side of \eqref{est_rho2}, we use \eqref{eq_char2} and \eqref{eq_gam} to have
\begin{align*}
\lt|X(t;0,x) -\xi(t)\rt| &= \lt|\int_0^t u(X(s;0,x)) - \bar u(\xi(s),s)\,ds\rt|\cr
&\quad \leq \int_0^t \lt|u(X(s;0,x)) - u(\xi(s),s)\rt|ds + \int_0^t \lt|u(\xi(s),s) - \bar u(\xi(s),s)\rt|ds\cr
&\quad \leq \|\nabla u\|_{L^\infty}\int_0^t  \lt|X(s;0,x) - \xi(s)\rt|ds + \int_0^t \lt| u(\xi(s),s) - \bar u(\xi(s),s)\rt|ds,
\end{align*}
and subsequently, this yields
\[
\lt|X(t;0,x) -\xi(t)\rt| \leq Ce^{C\|\nabla u\|_{L^\infty}}\int_0^t \lt|u(\xi(s),s) - \bar u(\xi(s),s)\rt|ds,
\]
where $C>0$ is independent of $\e>0$. Combining this with \eqref{est_rho2}, we have
$$\begin{aligned}
d_p^p(\bar \rho(\cdot,t), \hat\rho(\cdot,t)) &\leq Ce^{Cp\|\nabla u\|_{L^\infty}}\iint_{\Xi_T \times \om} \lt|\int_0^t \lt|\bar u(\xi(s),s) - u(\xi(s),s)\rt|ds\rt|^p d\eta(\xi,x)\cr
&\leq Ct^{p-1}e^{Cp\|\nabla  u\|_{L^\infty}}\int_0^t\iint_{\Xi_T \times \om} \lt|\bar u(\xi(s),s) - u(\xi(s),s)\rt|^p d\eta(\xi,x)\,ds\cr
&\leq Ct^{p-1}e^{Cp\|\nabla u\|_{L^\infty}}\int_0^t \intt |\bar u(x,s) - u(x,s)|^p \bar\rho(x,s)\,dxds,
\end{aligned}$$
where $C>0$ is independent of $\e > 0$ and $p$, and we used the relation \eqref{eq_gam2}. This asserts
\[
d_p(\bar \rho(\cdot,t), \hat\rho(\cdot,t)) \leq Ct^{1-1/p}e^{C\|\nabla u\|_{L^\infty}}\lt(\int_0^t \intt |\bar u(x,s) - u(x,s)|^p \bar\rho(x,s)\,dxds\rt)^{1/p},
\]
where $C>0$ depends only on $T$.  \newline

{\bf Step C.-} Combining the estimates in {\bf Step A} \& {\bf Step B} yields
$$\begin{aligned}
d_p(\bar \rho(\cdot,t), \rho(\cdot,t)) 
&\leq d_p(\bar \rho(\cdot,t), \hat\rho(\cdot,t)) + d_p(\rho(\cdot,t), \hat\rho(\cdot,t))\cr
&\leq Ce^{C\|\nabla u\|_{L^\infty}}\lt( d_p(\bar\rho(0), \rho(0)) + \lt(\int_0^t \intt |\bar u(x,s) - u(x,s)|^p \bar\rho(x,s)\,dxds\rt)^{1/p}\rt),
\end{aligned}$$
where $C>0$ depends only on $T$. This provides the first assertion. Since the constant $C>0$ which appears in the above does not depend on $p$, after taking the supremum over the support of $\rho$ and the time interval $[0,T]$, we can pass to the limit $p \to \infty$ to derive the second assertion. Finally, if $p \in [1,2]$, then by using H\"older inequality the integral term on the right hand side of the above inequality can be estimated as
$$\begin{aligned}
&\int_0^t \intt |\bar u(x,s) - u(x,s)|^p \rho(x,s)\,dxds  \leq t^{1-p/2} \lt( \int_0^t \intt |\bar u(x,s) - u(x,s)|^2 \rho(x,s)\,dxds\rt)^{p/2}.
\end{aligned}$$
Hence we have
$$\begin{aligned}
&d_p(\bar \rho(t), \rho(t))  \leq Ce^{C\|\nabla \bar u\|_{L^\infty}}\lt( d_p(\bar\rho(0), \rho(0)) +\lt(\int_0^t \intt |\bar u(x,s) - u(x,s)|^2 \rho(x,s)\,dxds\rt)^{1/2}\rt).
\end{aligned}$$
This completes the proof.
\end{proof}

%%%%%%%%%%%%%%%%%%%%%%%%%%%%%%%%%%%%
%
%
%
% \section{Proof of Theorem \ref{main_thm}: Large friction limit from \eqref{main_eq} to \eqref{main_eq2}}
%
%
%
%
%%%%%%%%%%%%%%%%%%%%%%%%%%%%%%%%%%%%

\section{Proof of Theorem \ref{main_thm}: Regular interaction case}\label{sec_pf}

In this section, we provide the details of proof for Theorem \ref{main_thm}. For this, we first estimate the relative entropy by using the weak formulation.

\begin{proposition} Let $(\rho^\e,\rho^\e u^\e)$ be a weak solution to the system \eqref{main_eq} in the sense of Definitions \ref{def_weak} and \ref{def_weak2} when the interaction potential $W$ satisfies $\nabla W \in \W^{1,\infty}$ and $-\Delta W = \delta_0$, respectively, and $(\rho,u)$ be a strong solution to the equation \eqref{main_eq2}--\eqref{main_eq22}  in the sense of Definition \ref{def_strong}. Then we have
\begin{align}\label{est_re0}
\begin{aligned}
&\frac12\intt \rho^\e |u^\e - u|^2\,dx\bigg|_{\tau = 0}^{\tau = t} + \frac\gamma\e \int_0^t \intt \rho^\e |u^\e - u|^2\,dx d\tau\cr
&\quad = -\int_0^t  \intt \rho^\e \nabla u : (u^\e - u) \otimes (u^\e - u)\,dxd\tau  - \frac{1}{\e}\int_0^t \intt \rho^\e(u^\e - u) \cdot \nabla W \star (\rho^\e - \rho)\,dxd\tau\cr
&\qquad -\int_0^t \intt \rho^\e(u^\e - u) \cdot e\,dxd\tau  + \frac1\e\int_0^t \inttt \phi(x-y) \rho^\e(x) (u^\e(x) - u(x))\cr
&\hspace{6.5cm}  \cdot \lt( (u^\e(y) - u^\e(x))\rho^\e(y) - (u(y) - u(x))\rho(y)  \rt)dxdyd\tau.
\end{aligned}
\end{align}
\end{proposition}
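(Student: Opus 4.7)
The plan is a direct modulated-energy computation based on the algebraic decomposition
\[
\tfrac12\rho^\e|u^\e-u|^2 = \tfrac12\rho^\e|u^\e|^2 - \rho^\e u^\e\cdot u + \tfrac12\rho^\e|u|^2,
\]
and I would derive a time-integrated identity for each of the three pieces. For $\tfrac12\rho^\e|u^\e|^2$ I would invoke the weak energy equality \eqref{weak_form1} divided by $\e$, with test function $\psi$ approximating $\mathbf{1}_{[0,t]}$. For $\tfrac12\rho^\e|u|^2$ I would use the distributional continuity equation for $\rho^\e$ tested against $|u|^2$, which is admissible since $\pa_t u,\nabla u\in L^\infty$ by Definition~\ref{def_strong}(ii). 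For the mixed term $\rho^\e u^\e\cdot u$ I would test the weak momentum equation, again divided by $\e$, against $u$ itself, using the same regularity.

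Adding the three identities, I would sort the resulting terms by their $\e$-scaling. The inertial pieces (those without $1/\e$) combine through the splitting
\[
-\rho^\e(u^\e-u)\cdot(u^\e\cdot\nabla)u = -\rho^\e\nabla u:(u^\e-u)\otimes(u^\e-u) - \rho^\e(u^\e-u)\cdot(u\cdot\nabla)u,
\]
which, together with the $-\rho^\e(u^\e-u)\cdot\pa_t u$ produced by the $|u|^2$-derivative and the momentum test, yield the symmetric convective term $-\intt\rho^\e\nabla u:(u^\e-u)\otimes(u^\e-u)\,dx$ and the material-derivative residual $-\intt\rho^\e(u^\e-u)\cdot e\,dx$ with $e:=\pa_t u+(u\cdot\nabla)u$. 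The $1/\e$ damping contributions combine to $-\frac{\gamma}{\e}\rho^\e u^\e\cdot(u^\e-u) = -\frac{\gamma}{\e}\rho^\e|u^\e-u|^2 - \frac{\gamma}{\e}\rho^\e u\cdot(u^\e-u)$, and the crossed piece $-\frac{\gamma}{\e}\rho^\e u\cdot(u^\e-u)$ remains to be absorbed.

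That absorption uses the closure \eqref{main_eq22}, interpreted as the pointwise relation $\gamma u = -\nabla W\star\rho + \int_\om\phi(x-y)(u(y)-u(x))\rho(y)\,dy$ on $\mathrm{supp}\,\rho$ and extended to all of $\om$ by the same formula. Dotting with $\rho^\e(u^\e-u)/\e$ and integrating converts the crossed damping into $\frac{1}{\e}\rho^\e(u^\e-u)\cdot\nabla W\star\rho$ minus an alignment integral against $(\rho,u)$. The $\nabla W\star\rho$ piece combines with the $-\frac{1}{\e}(\nabla W\star\rho^\e)\rho^\e(u^\e-u)$ cross-terms obtained from the energy identity and the momentum test to produce the stated $-\frac{1}{\e}\rho^\e(u^\e-u)\cdot\nabla W\star(\rho^\e-\rho)$. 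For the alignment, using the symmetry $\phi(-z)=\phi(z)$ I would rewrite the dissipation $\frac{1}{2\e}\inttt\phi|u^\e(x)-u^\e(y)|^2\rho^\e(x)\rho^\e(y)\,dxdy$ (moved to the right-hand side with a minus sign) as $\frac{1}{\e}\inttt\phi(x-y)\rho^\e(x)\rho^\e(y)\,u^\e(x)\cdot(u^\e(y)-u^\e(x))\,dxdy$; summed with the alignment cross-term from the momentum test and with the $\rho,u$-alignment produced by the closure, everything telescopes into the stated double integral with integrand $\phi(x-y)\rho^\e(x)(u^\e(x)-u(x))\cdot[(u^\e(y)-u^\e(x))\rho^\e(y) - (u(y)-u(x))\rho(y)]$.

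The main obstacle will be the bookkeeping in this final assembly: tracking all signs and invoking the symmetry of $\phi$ so that the three nonlocal contributions collapse exactly into the form \eqref{est_re0}, and handling the absence of pressure by replacing it with the pointwise closure \eqref{main_eq22} on (an extension of) $\mathrm{supp}\,\rho$. A secondary technical point is justifying $u$ as a legitimate test function against the weak momentum equation; the regularity $u\in L^\infty(0,T;\W^{1,\infty})$ and $\pa_t u\in L^\infty$ from Definition~\ref{def_strong}(ii) is precisely what is needed for this.
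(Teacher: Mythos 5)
Your proposal is correct and uses essentially the same strategy as the paper: the weak energy identity for $\rho^\e$ from \eqref{weak_form1}, testing against $u$ and $|u|^2$, and the pointwise closure \eqref{main_eq22} multiplied by $\rho^\e(u^\e-u)$, combined with the symmetrization in $\phi$. The one cosmetic difference is in the bookkeeping: you work directly with the three pieces of $\tfrac12\rho^\e|u^\e-u|^2$ built from the $\rho^\e$-equations and the closure, whereas the paper introduces the difference equations \eqref{dconti}--\eqref{dmom} for $(\rho^\e-\rho,\rho^\e u^\e-\rho u)$, separately derives the kinetic-energy identity for $(\rho,u)$, and lets the $\rho$-terms cancel in the final assembly. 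Your route avoids writing down a momentum-type equation for $\rho u$ and the extra cancellations that go with it, which makes the bookkeeping a bit lighter, but the intermediate computations and the places where the regularity of $u$ and the symmetry of $\phi$ are invoked are identical. The point you flag about extending the pointwise relation $\gamma u = -\nabla W\star\rho + \int_\om\phi(u(y)-u(x))\rho(y)\,dy$ off $\mathrm{supp}\,\rho$ is also present in the paper: the hypothesis that $u\in L^\infty(0,T;\W^{1,\infty}(\om))$ with $\pa_t u\in L^\infty$ in Definition~\ref{def_strong}(ii) is precisely what makes $u$ and $|u|^2$ admissible test functions on all of $\om$, and the closure is taken as defining $u$ globally (cf.\ the discussion around \eqref{eqn_u} in Appendix~\ref{app_a}).
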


\begin{proof} Although this proof is very similar to \cite{CPW20,LT13,LT17}, for the completeness of our work, we provide the details. Let us take the following test function
\begin{displaymath}
\psi(\tau) = \left\{ \begin{array}{ll}
1 & \textrm{for $0 \leq \tau < t$}\\[2mm]
\displaystyle \frac{t-\tau}{k} + 1 & \textrm{for $t \leq \tau < t + k$}\\[2mm]
 0 & \textrm{for $\tau \geq t+ k$}
  \end{array} \right.
\end{displaymath}
in \eqref{weak_form1} to obtain
\begin{align*}
\begin{aligned}
&\frac{\e}{2k}\int_t^{t+k} \intt \rho^\e |u^\e|^2\,dxd\tau + \gamma\int_0^t \intt \rho^\e |u^\e|^2\,dxd\tau \cr
&\quad + \frac{1}{2} \int_0^t  \inttt \phi(x-y)|u^\e(x) - u^\e(y)|^2 \rho^\e(x) \rho^\e(y)\,dxdyd\tau + \int_t^{t+k} \intt \lt(\frac{t-\tau}{k} + 1 \rt)\rho^\e |u^\e|^2\,dxd\tau\cr
&\quad + \frac{1}{2}\int_t^{t+k}  \inttt \lt(\frac{t-\tau}{k} + 1 \rt) \phi(x-y)|u^\e(x) - u^\e(y)|^2 \rho^\e(x) \rho^\e(y)\,dxdyd\tau\cr
&\qquad = \frac\e2\intt( \rho^\e|u^\e|^2) |_{\tau=0} \,dx - \int_0^t\intt (\nabla W \star \rho^\e)\rho^\e u^\e \,dx d\tau - \int_t^{t+k}\intt \lt(\frac{t-\tau}{k} + 1 \rt)  (\nabla W \star \rho^\e)\rho^\e u^\e \,dx d\tau.
\end{aligned}
\end{align*}
We then send $k \to 0+$ and divide the resulting equation by $\e$ to derive the kinetic energy estimate:
\begin{align}\label{est_f}
\begin{aligned}
& \frac12\intt \rho^\e|u^\e|^2  \,dx \bigg|_{\tau=0}^{\tau = t} + \frac\gamma\e\int_0^t \intt \rho^\e |u^\e|^2\,dxd\tau  \cr
&\quad + \frac{1}{2\e} \int_0^t  \inttt \phi(x-y)|u^\e(x) - u^\e(y)|^2 \rho^\e(x) \rho^\e(y)\,dxdyd\tau\cr
 &\qquad = -\frac{1}{\e} \int_0^t\intt (\nabla W \star \rho^\e) \cdot \rho^\e u^\e \,dx d\tau.
 \end{aligned}
\end{align}
In a similar fashion, we can also estimate the kinetic energy for the limiting system \eqref{main_eq2} as
\begin{align*}
\begin{aligned}
&\frac12\intt \rho|u|^2 \,dx  \bigg|_{\tau=0}^{\tau = t} + \gamma\int_0^t \intt \rho |u|^2\,dxd\tau  + \frac{1}{2} \int_0^t  \inttt \phi(x-y)|u(x) - u(y)|^2 \rho(x) \rho(y)\,dxdyd\tau\cr
 &\qquad =  -\int_0^t\intt (\nabla W \star \rho)\rho u \,dx d\tau + \int_0^t\intt \rho e \cdot u\,dxd\tau,
 \end{aligned}
\end{align*}
where $e =  \pa_t u + (u \cdot \nabla) u$. On the other hand, it follows from \eqref{main_eq} and \eqref{main_eq2}--\eqref{main_eq22} that
\bq\label{dconti}
\pa_t (\rho^\e - \rho) + \nabla \cdot (\rho^\e u^\e - \rho u)  = 0
\eq
and
\begin{align}\label{dmom}
\begin{aligned}
\pa_t (\rho^\e u^\e - \rho u) + \nabla \cdot (\rho^\e u^\e \otimes u^\e - \rho u\otimes u)& = -\frac{\gamma}{\e}(\rho^\e u^\e - \rho u) - \frac1\e \lt( (\nabla W \star \rho^\e) \rho^\e - (\nabla W \star \rho)\rho\rt)\cr
&\quad + \frac1\e \rho^\e \intt \phi(x-y) (u^\e(y) - u^\e(x))\rho^\e(y)\,dy \cr
&\quad -  \frac1\e \rho \intt \phi(x-y) (u(y) - u(x))\rho(y)\,dy - \rho e.
 \end{aligned}
\end{align}
Then we apply a test function $\varphi \in \mc([0,T]; \W^{1,\infty}(\om))$ with $\varphi(\cdot,T)=0$ to \eqref{dconti} to get
\begin{align*}
\begin{aligned}
&-\int_0^T \intt (\rho^\e - \rho)\pa_t \varphi\,dxdt - \int_0^T \intt \nabla \varphi \cdot (\rho^\e u^\e - \rho u)\,dxdt  = \intt \varphi (\rho^\e - \rho) \,dx\bigg|_{t=0} = 0.
 \end{aligned}
\end{align*}
Similarly, we also consider a test function $\bar\varphi \in \mc([0,T]; \W^{1,\infty}(\om))$ with $\bar\varphi(\cdot,T) = 0$ for \eqref{dmom} to yield
\begin{align*}
\begin{aligned}
&-\int_0^T \intt \pa_t\bar\varphi \cdot (\rho^\e u^\e - \rho u)\,dxdt - \int_0^T \intt \nabla \bar\varphi : (\rho^\e u^\e \otimes u^\e - \rho u \otimes u)\,dxdt\cr
&\quad = \intt \bar\varphi \cdot (\rho^\e u^\e - \rho u)\,dx\bigg|_{t=0} - \frac{\gamma}{\e} \int_0^T \intt \bar\varphi \cdot (\rho^\e u^\e - \rho u)\,dxdt\cr
&\qquad -\frac{1}{\e}\int_0^T\intt \bar\varphi \cdot \lt( (\nabla W \star \rho^\e) \rho^\e - (\nabla W \star \rho)\rho\rt)dx dt\cr
&\qquad + \frac1\e \int_0^T \inttt  \rho^\e(x) \phi(x-y) \bar\varphi \cdot (u^\e(y) - u^\e(x))\rho^\e(y)\,dxdydt \cr
&\qquad -  \frac1\e \int_0^T \inttt  \rho(x) \phi(x-y) \bar\varphi \cdot (u(y) - u(x))\rho(y)\,dxdydt.
 \end{aligned}
\end{align*}
We then choose the following specific test functions:
\[
\varphi = -\psi(\tau) \frac{|u|^2}{2} \quad \mbox{and} \quad \bar\varphi = \psi(\tau) u.
\]
Then similarly as before, we find
\begin{align}\label{est_conti}
\begin{aligned}
&\intt \lt(-\frac{|u|^2}{2}(\rho^\e - \rho) \rt)dx \bigg|_{\tau=0}^{\tau = t} + \int_0^t \intt \pa_\tau \lt(\frac{|u|^2}{2} \rt) (\rho^\e - \rho)\,dxd\tau\cr
&\quad + \int_0^t \intt \nabla \lt( \frac{|u|^2}{2} \rt) \cdot (\rho^\e u^\e - \rho u)\,dxd\tau = 0
\end{aligned}
\end{align}
and
\begin{align}\label{est_mom}
\begin{aligned}
&\intt u \cdot (\rho^\e u^\e - \rho u)\,dx \bigg|_{\tau=0}^{\tau = t} - \int_0^t \intt \pa_\tau u \cdot (\rho^\e u^\e - \rho u)\,dxd\tau  - \int_0^t \intt \nabla u : (\rho^\e u^\e \otimes u^\e - \rho u\otimes u)\,dxd\tau\cr
&\qquad = -\frac{\gamma}{\e} \int_0^t \intt u \cdot (\rho^\e u^\e - \rho u)\,dxd\tau - \frac1\e\int_0^t \intt u \cdot \lt(( \nabla W \star \rho^\e) \rho^\e - (\nabla W \star \rho) \rho  \rt) dxd\tau\cr
&\quad \qquad + \frac1\e \int_0^T \inttt  \rho^\e(x) \phi(x-y)  u(x) \cdot (u^\e(y) - u^\e(x))\rho^\e(y)\,dxdydt \cr
&\quad \qquad -  \frac1\e \int_0^T \inttt  \rho(x) \phi(x-y)u(x) \cdot (u(y) - u(x))\rho(y)\,dxdydt.
\end{aligned}
\end{align}
In order to derive the relative entropy inequality, we notice that the velocity field $u$ of the equation \eqref{main_eq2} satisfies 
\begin{align*}
\begin{aligned}
\pa_\tau u + u \cdot \nabla u &= - \frac{\gamma}{\e} u - \frac1\e(\nabla W \star \rho) + \frac1\e\intt \phi(x-y) (u(y) - u(x))\rho(y)\,dy + e,
\end{aligned}
\end{align*}
where $e = \pa_\tau u + u \cdot \nabla u$. We then multiply the above by $\rho^\e (u^\e - u)$ to have
\begin{align*}
\begin{aligned}
&-(\rho^\e - \rho)\pa_\tau \lt(\frac{|u|^2}{2} \rt) + \pa_\tau u \cdot (\rho^\e u^\e - \rho u) - \nabla\lt(\frac{|u|^2}{2} \rt) (\rho^\e u^\e - \rho u) + \nabla u: (\rho^\e u^\e \otimes u^\e - \rho u\otimes u)\cr
&\qquad = \rho^\e\nabla u: (u^\e -u)\otimes (u^\e - u) - \frac1\e \rho^\e(\nabla W \star \rho)\cdot(u^\e - u)\cr
&\qquad \quad -\frac\gamma\e \rho^\e u \cdot (u^\e - u) + \frac1\e\rho^\e (u^\e - u) \cdot\intt \phi(x-y)  (u(y) - u(x))\rho(y)\,dy + \rho^\e  e \cdot (u^\e - u). 
\end{aligned}
\end{align*}
Then we integrate the above system over $\om \times [0,T]$, and finally combine it with \eqref{est_conti} and \eqref{est_mom} to conclude the desired result.
\end{proof}

\begin{remark}\label{rmk_free} If $\nabla W \in L^\infty(\om)$, then we estimate
\begin{align*}
\begin{aligned}
\lt|\int_0^t\intt (\nabla W \star \rho^\e)\cdot \rho^\e u^\e \,dx d\tau \rt| &\leq \|\nabla W\|_{L^\infty}\lt( \int_0^t\intt \rho^\e |u^\e| \,dx d\tau\rt)\cr
& \leq \|\nabla W\|_{L^\infty}T\lt( \int_0^t\intt \rho^\e |u^\e|^2 \,dx d\tau \rt)^{1/2}\cr
&\leq C + \frac\gamma2  \int_0^t\intt \rho^\e |u^\e|^2 \,dx d\tau
\end{aligned}
\end{align*}
due to $\|\rho^\e_0\|=1$, where $C>0$ depends only on $\|\nabla W\|_{L^\infty}$, $T$, and $\gamma>0$. This together with \eqref{est_f} yields
\[
\int_0^t \intt \rho^\e |u^\e|^2\,dxd\tau \leq C\e \intt \rho^\e_0 |u^\e_0|^2\,dx + C,
\]
where $C>0$ is independent of $\e$. 

In the Coulomb interaction case, we estimate 
\[
- \int_0^t\intt (\nabla W \star \rho^\e)\cdot \rho^\e u^\e \,dx d\tau  = \int_0^t\intt (W \star \rho^\e) (\nabla \cdot \rho^\e u^\e) \,dx d\tau = -\frac12 \intt (W\star\rho^\e) \rho^\e\,dx \bigg|_{\tau=0}^{\tau = t}.
\]
Thus we obtain from \eqref{est_f} that 
\bq\label{est_kin}
\gamma \int_0^t \intt \rho^\e |u^\e|^2\,dxd\tau \leq C\e \intt \rho^\e_0 |u^\e_0|^2\,dx + \intt (W\star\rho^\e_0) \rho^\e_0\,dx - \intt (W\star\rho^\e) \rho^\e\,dx.
\eq
Note that the third term on the right hand side of the above inequality is nonpositive when $d\geq 3$. If $d=1$, then $W = |x|$ and thus $\|\nabla W\|_{L^\infty} \leq 1$. Hence by the above estimates we find
\[
\int_0^t \intt \rho^\e |u^\e|^2\,dxd\tau \leq C\e \intt \rho^\e_0 |u^\e_0|^2\,dx + C.
\]
In case $d=2$, we have
\[
- \intt (W\star\rho^\e) \rho^\e\,dx \leq \frac1{2\pi} \iint_{|x-y|\geq 1} \log|x-y| \rho^\e(x) \rho^\e(y)\,dxdy \leq 4 \intt |x|^2 \rho^\e\,dx
\]
since $\log s \leq s^2$ for $ s\geq 1$. On the other hand, we can easily estimate
\[
\intt |x|^2 \rho^\e\,dx \leq C\intt |x|^2 \rho^\e_0\,dx + C\int_0^t \intt \rho^\e |u^\e|^2\,dxd\tau,
\]
and thus choosing $\gamma > 0$ large enough and combining this with \eqref{est_kin} give
\[
\gamma \int_0^t \intt \rho^\e |u^\e|^2\,dxd\tau \leq C\e \intt \rho^\e_0 |u^\e_0|^2\,dx + \intt (W\star\rho^\e_0) \rho^\e_0\,dx + C\intt |x|^2 \rho^\e_0\,dx.
\]
\end{remark}

We now provide the details of proof of Theorem \ref{main_thm}.

\begin{proof}[Proof of Theorem \ref{main_thm}] We first estimate the last term on the right hand side of \eqref{est_re0} as
\begin{align*}
\begin{aligned}
&\int_0^t \inttt \phi(x-y) \rho^\e(x) (u^\e(x) - u(x))  \cdot \lt( (u^\e(y) - u^\e(x))\rho^\e(y) - (u(y) - u(x))\rho(y)  \rt)dxdyd\tau\cr
&\quad = -\frac12\int_0^t\inttt \phi(x-y) \rho^\e(x) \rho^\e(y) |(u^\e(x) - u(x)) - (u^\e(y) - u(y)) |^2\,dxdyd\tau \cr
&\qquad + \int_0^t\inttt \phi(x-y) \rho^\e(x) (\rho^\e - \rho)(y) (u^\e(x) - u(x)) \cdot (u(y) - u(x))\,dxdyd\tau.
\end{aligned}
\end{align*}
This together with \eqref{est_re0} yields
\begin{align}\label{est_re0}
\begin{aligned}
&\frac12\intt \rho^\e |u^\e - u|^2\,dx + \frac\gamma\e \int_0^t \intt \rho^\e |u^\e - u|^2\,dx d\tau\cr
&\quad + \frac1{2\e}\int_0^t\inttt \phi(x-y) \rho^\e(x) \rho^\e(y) |(u^\e(x) - u(x)) - (u^\e(y) - u(y)) |^2\,dxdyd\tau\cr
&\quad = \frac12\intt \rho^\e_0 |u^\e_0 - u_0|^2\,dx -\int_0^t  \intt \rho^\e \nabla u : (u^\e - u) \otimes (u^\e - u)\,dxd\tau \cr
&\qquad - \frac{1}{\e}\int_0^t \intt \rho^\e(u^\e - u) \cdot \nabla W \star (\rho^\e - \rho)\,dxd\tau --\int_0^t \intt \rho^\e(u^\e - u) \cdot e\,dxd\tau \cr
&\qquad +  \frac1\e\int_0^t\inttt \phi(x-y) \rho^\e(x) (\rho^\e - \rho)(y) (u^\e(x) - u(x)) \cdot (u(y) - u(x))\,dxdyd\tau\cr
&\quad =: \sum_{i=1}^5 I_i.
\end{aligned}
\end{align}

$\diamond$ Estimate of $I_2$: We simply use the strong regularity assumptions on the solution $(\rho,u)$ to the limiting system \eqref{main_eq2}--\eqref{main_eq22} to get
\[
I_2 \leq \|\nabla u\|_{L^\infty}\int_0^t\intt \rho^\e |u^\e - u|^2\,dxd\tau.
\]

$\diamond$ Estimate of $I_3$: Note that
\[
\|\nabla W \star (\rho^\e - \rho)\|_{L^\infty} \leq \|\nabla W\|_{ Lip} \,d_1(\rho^\e,\rho) \leq \|\nabla W\|_{ Lip} \,\,d_p(\rho^\e,\rho)
\]
for $p \in [1,2]$. This gives
\begin{align*}
\begin{aligned}
I_3 &\leq \frac{\|\nabla W\|_{ Lip}}{\e}\int_0^t \lt(\intt \rho^\e |u^\e - u|^2\,dx \rt)^{1/2} \lt(\intt \rho^\e\,dx \rt)^{1/2} d_p (\rho^\e,\rho) \,d\tau\cr
&\leq \frac{\|\nabla W\|_{ Lip}}{\e} \lt( \int_0^t \intt \rho^\e |u^\e - u|^2\,dx d\tau\rt)^{1/2}\lt(\int_0^td_p^2 (\rho^\e,\rho) \,d\tau  \rt)^{1/2}\cr
&\leq \frac{C}{\e}  \int_0^t \intt \rho^\e |u^\e - u|^2\,dx d\tau + \frac{C}{\e} \int_0^td_p^2 (\rho^\e,\rho) \,d\tau.
\end{aligned}
\end{align*}

$\diamond$ Estimate of $I_4$: Recall $e = \pa_t u + (u \cdot \nabla) u$, and we estimate
\begin{align*}
\begin{aligned}
I_4 \leq \|\pa_t u\|_{L^\infty} \int_0^t \intt \rho^\e |u^\e - u|\,dx + \int_0^t \intt \rho^\e|u^\e - u| | u \cdot \nabla u|\,dxd\tau =: I_4^1 + I_4^2.
\end{aligned}
\end{align*}
Similarly to the estimate of $I_3$, we get
\begin{align*}
\begin{aligned}
I_4^1 &\leq \|\pa_t u\|_{L^\infty}\int_0^t \lt(\intt \rho^\e |u^\e - u|^2\,dx \rt)^{1/2} \lt(\intt \rho^\e\,dx \rt)^{1/2} \,d\tau\cr
&\leq  \|\pa_t u\|_{L^\infty} \sqrt{ T} \lt(\int_0^t \intt \rho^\e |u^\e - u|^2\,dx d\tau\rt)^{1/2}  \cr
&\leq\frac{C}{\e}\int_0^t\intt \rho^\e |u^\e - u|^2\,dxd\tau + C \e,
\end{aligned}
\end{align*}
where $C>0$ only depends on $\|\pa_t u\|_{L^\infty}, \|\rho_0^\e\|_{L^1}$, and $T$. 

For $I_4^2$, we find
\begin{align*}
\begin{aligned}
I_4^2 &= \int_0^t \intt \rho^\e|u^\e - u| | (u  - u^\e + u^\e)\cdot \nabla u|\,dxd\tau\cr
&\leq \|\nabla u\|_{L^\infty} \lt(\int_0^t \intt \rho^\e|u^\e - u|^2 \,dxd\tau + \lt(\int_0^t \intt \rho^\e |u^\e - u|^2\,dx d\tau\rt)^{1/2}\lt(\int_0^t \intt \rho^\e |u^\e|^2\,dx d\tau\rt)^{1/2} \rt).
\end{aligned}
\end{align*}
We then combine this estimate with the uniform bound estimate in Remark \ref{rmk_free}  to yield
\[
I_4^2 \leq \frac{C}{\e}\int_0^t \intt \rho^\e|u^\e - u|^2 \,dxd\tau + C\e
\]
for some $C>0$ independent of $\e>0$. This implies
\[
I_4 \leq \frac{C}{\e}\int_0^t \intt \rho^\e|u^\e - u|^2 \,dxd\tau + C\e.
\]

$\diamond$ Estimate of $I_5$: We divide $I_5$ into two terms:
\begin{align*}
\begin{aligned}
I_5 &= \frac{1}{\e}\int_0^t \intt \lt(\intt\phi(x-y)u(y) (\rho^\e - \rho)(y) \,dy\rt) \cdot (u^\e-u)(x)\rho^\e(x)\,dxd\tau\cr
&\quad + \frac{1}{\e} \int_0^t \intt \lt(\intt \phi(x-y)(\rho^\e - \rho)(y)\,dy\rt) (u^\e(x) - u(x)) \cdot u(x) \rho^\e(x)\,dxd\tau\cr
&=: I_5^1 + I_5^2.
\end{aligned}
\end{align*}
Here we use the regularity of $u$ and $\phi$, H\"older's inequality, the inequality $d_p \leq d_q$ for $1 \leq p \leq q \leq \infty$ to estimate
\begin{align*}
\begin{aligned}
I_5^1 &\leq \frac{\|\phi u\|_{Lip}}{\e}\int_0^t d_1(\rho^\e,\rho) \lt( \intt \rho^\e |u^\e - u|\,dx\rt)^{1/2}\,d\tau \cr
&\leq \frac{\|\phi u\|_{ Lip}}{\e}\int_0^t d_p(\rho^\e,\rho) \lt( \intt \rho^\e |u^\e - u|\,dx\rt)^{1/2}\,d\tau \cr
&\leq \frac{\|\phi u\|_{ Lip}}{\e}\lt(\int_0^t d_p^2(\rho^\e,\rho)\,d\tau\rt)^{1/2} \lt(\int_0^t\intt \rho^\e |u^\e - u|^2\,dxd\tau\rt)^{1/2}
\end{aligned}
\end{align*}
and
\begin{align*}
\begin{aligned}
I_5^2 &\leq \frac{\|\phi\|_{ Lip}}{\e} \int_0^t d_p(\rho^\e, \rho)\lt( \intt \rho^\e |u^\e - u||u|\,dx\rt)d\tau \cr
&\leq \frac{\|u\|_{L^\infty}\|\phi\|_{ Lip}}{\e} \lt(\int_0^t d_p^2(\rho^\e,\rho)\,d\tau\rt)^{1/2} \lt(\int_0^t\intt \rho^\e |u^\e - u|^2\,dxd\tau\rt)^{1/2}.
\end{aligned}
\end{align*}
This asserts
$$\begin{aligned}
I_5 &\leq \frac{C}{\e} \lt(\int_0^t d_p^2(\rho^\e,\rho)\,d\tau\rt)^{1/2} \lt(\int_0^t\intt \rho^\e |u^\e - u|^2\,dxd\tau\rt)^{1/2}\cr
&\leq \frac{C}{\e} \int_0^td_p^2 (\rho^\e,\rho) \,d\tau + \frac{C}{\e}  \int_0^t \intt \rho^\e |u^\e - u|^2\,dx d\tau,
\end{aligned}$$
where $C$ depends only on $\|\rho^\e_0\|_{L^1}, \|\phi\|_{\W^{1,\infty}}$, and $\|u\|_{\W^{1,\infty}}$. 

Now we combine all of the above estimates to have
\begin{align}\label{est_re0}
\begin{aligned}
&\frac12\intt \rho^\e |u^\e - u|^2\,dx + \frac{(\gamma-C)}{\e} \int_0^t \intt \rho^\e |u^\e - u|^2\,dx d\tau\cr
&\quad + \frac1{2\e}\int_0^t\inttt \phi(x-y) \rho^\e(x) \rho^\e(y) |(u^\e(x) - u(x)) - (u^\e(y) - u(y)) |^2\,dxdyd\tau\cr
&\qquad \leq \frac12\intt \rho^\e_0 |u^\e_0 - u_0|^2\,dx + \frac{C}{\e} \int_0^t d_p^2 (\rho^\e,\rho) \,d\tau + C\e
\end{aligned}
\end{align}
for $\e < 1$, where $C>0$ is independent of $\e>0$. 

Note that our solution $(\rho^\e, \rho^\e u^\e)$ has a bounded kinetic energy $\rho^\e |u^\e|^2 \in L^\infty(0,T;L^1(\om))$, that is, the integrability condition \eqref{est_p1} holds with $p=2$, thus by using the interpolation inequality, we can use Proposition \ref{prop_gd} with $(\bar \rho, \bar u) = (\rho^\e, u^\e)$ to estimate the $p$-Wasserstein distance as 
\bq\label{mkr_re}
d_p(\rho^\e(\cdot,t), \rho(\cdot,t)) \leq C\lt( d_p(\rho^\e_0, \rho_0) +\lt(\int_0^t \intt \rho^\e |u^\e - u|^2\,dx d\tau\rt)^{1/2}\rt),
\eq
where $C>0$ depends on $T$ and $\|u\|_{\W^{1,\infty}}$, but independent of $\e$. Putting \eqref{mkr_re} into \eqref{est_re0} yields
$$\begin{aligned}
&\frac12\intt \rho^\e |u^\e - u|^2\,dx + \frac{1}{\e} d_p^2(\rho^\e(\cdot,t), \rho(\cdot,t)) + \frac{(\gamma-C)}{\e} \int_0^t \intt \rho^\e |u^\e - u|^2\,dx d\tau\cr
&\quad + \frac1{2\e}\int_0^t\inttt \phi(x-y) \rho^\e(x) \rho^\e(y) |(u^\e(x) - u(x)) - (u^\e(y) - u(y)) |^2\,dxdyd\tau\cr
&\qquad \leq \frac12\intt \rho^\e_0 |u^\e_0 - u_0|^2\,dx + \frac{C}{\e} d_p^2 (\rho^\e_0,\rho_0) + C\e,
\end{aligned}$$
where $C>0$ is independent of $\e>0$. Then we again use the above estimate with \eqref{mkr_re} to have
$$\begin{aligned}
&\frac12\intt \rho^\e |u^\e - u|^2\,dx + \frac{(\gamma-C)}{\e} \int_0^t \intt \rho^\e |u^\e - u|^2\,dx d\tau\cr
&\quad + \frac1{2\e}\int_0^t\inttt \phi(x-y) \rho^\e(x) \rho^\e(y) |(u^\e(x) - u(x)) - (u^\e(y) - u(y)) |^2\,dxdyd\tau\cr
&\qquad \leq \frac12\intt \rho^\e_0 |u^\e_0 - u_0|^2\,dx + \frac{C}{\e} d_p^2 (\rho^\e_0,\rho_0) + C\e,
\end{aligned}$$
where $C>0$ is independent of $\e>0$. This completes the proof.
\end{proof}

%%%%%%%%%%%%%%%%%%%%%%%%%%%%%%%%%%%%%%%%%%%%%%%%%%%%%%%%%%%%%%%%%%%%%%%%%%%%%%%%%
%
%
%                     \section{Remarks on the Poisson case}
%
%
%%%%%%%%%%%%%%%%%%%%%%%%%%%%%%%%%%%%%%%%%%%%%%%%%%%%%%%%%%%%%%%%%%%%%%%%%%%%%%%%%

\section{Proof of Theorem \ref{main_thm2}: Coulomb interaction case}\label{sec_poisson}
In this section, we are interested in the interaction potential function $W$ given as the fundamental solution of Laplace's equation, i.e., $-\Delta W = \delta_0$. In order to handle this case, we only need to estimate $I_3$ term in \eqref{est_re0} in a different way because of lack of regularity of $\nabla W$. We notice that the estimate of $I_4$ have already estimated even for the Coulomb interaction case in the proof of Theorem \ref{main_thm}. Motivated from \cite{CFGS17,LT17}, we provide the following lemma which allows us to change some part of the term $I_3$ to the time derivative of $L^2$ norm of $\nabla W \star (\rho - \rho^\e)$.

\begin{lemma}\label{lem_wd} Suppose that the interaction potential $W$ satisfies $-\Delta W =  \delta_0$. Then we have
\[
\frac12\frac{d}{dt}\intt |\nabla W \star (\rho - \rho^\e)|^2\,dx = \intt \nabla W\star(\rho - \rho^\e)  \cdot \lt((\rho u) - (\rho^\e u^\e)\rt) dx.
\]
\end{lemma}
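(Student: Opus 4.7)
The key observation is that with $-\Delta W = \delta_0$, the quantity $\nabla W \star (\rho - \rho^\e)$ is the gradient of a potential. My plan is to introduce $\Phi := W \star (\rho - \rho^\e)$, so that $\nabla \Phi = \nabla W \star (\rho - \rho^\e)$ and $-\Delta \Phi = \rho - \rho^\e$, and then rewrite the squared $L^2$-norm as a duality pairing. Integration by parts gives
\[
\intt |\nabla W \star (\rho - \rho^\e)|^2\,dx = \intt |\nabla \Phi|^2\,dx = \intt \Phi \,(\rho - \rho^\e)\,dx.
\]

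Next I would differentiate this pairing in time. Because $W$ is even, convolution with $W$ is self-adjoint in the $L^2$ inner product: by Fubini, for any admissible $\mu,\nu$ one has $\int (W \star \mu)\,\nu\,dx = \int \mu\,(W \star \nu)\,dx$. Applying this to $\mu = \partial_t(\rho-\rho^\e)$ and $\nu = \rho - \rho^\e$, I obtain
\[
\frac{d}{dt}\intt \Phi\,(\rho - \rho^\e)\,dx = 2\intt \Phi\,\partial_t(\rho - \rho^\e)\,dx,
\]
since $\partial_t \Phi = W \star \partial_t(\rho - \rho^\e)$ and the two cross terms coincide by symmetry. Combined with the previous display, this yields
\[
\tfrac12 \tfrac{d}{dt}\intt |\nabla W \star (\rho - \rho^\e)|^2\,dx = \intt \Phi\,\partial_t(\rho - \rho^\e)\,dx.
\]

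Finally, I would use the continuity equations for $\rho^\e$ and $\rho$ to write $\partial_t(\rho - \rho^\e) = -\nabla \cdot (\rho u - \rho^\e u^\e)$, and then integrate by parts once more:
\[
\intt \Phi\,\partial_t(\rho - \rho^\e)\,dx = -\intt \Phi \,\nabla \cdot(\rho u - \rho^\e u^\e)\,dx = \intt \nabla \Phi \cdot (\rho u - \rho^\e u^\e)\,dx,
\]
which is exactly $\intt \nabla W \star (\rho - \rho^\e)\cdot\big((\rho u)-(\rho^\e u^\e)\big)\,dx$, as claimed.

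The main obstacle is justifying the integration by parts and the time differentiation rigorously, since $\Phi$ itself is not a priori in $L^2$ in $\R^d$ (only $\nabla \Phi$ is, by the assumption $\nabla W \star \rho^\e \in \mathcal{C}([0,T);L^2)$ of Definition~\ref{def_weak2}). One way around this is to pair the distributional identity $\partial_t(\rho-\rho^\e) = -\nabla\cdot(\rho u - \rho^\e u^\e)$ directly with $\Phi$ as a test function, using the fact that $\rho u - \rho^\e u^\e \in L^2$ against $\nabla \Phi \in L^2$ makes the right-hand side well-defined; all boundary terms vanish (trivially on $\T^d$, and by standard decay on $\R^d$ since $\nabla \Phi \in L^2$). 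Alternatively, one may argue by convolution mollification of $\rho - \rho^\e$, carry out the calculation at the regularized level, and pass to the limit using the $L^2$ bound on $\nabla W \star (\rho - \rho^\e)$.
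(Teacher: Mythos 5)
Your proof is correct and follows essentially the same route as the paper: both rest on the evenness of $W$ (self-adjointness of convolution by $W$), the identity $-\Delta W=\delta_0$, the continuity equation for $\rho-\rho^\e$, and a final integration by parts. The paper differentiates in time first and then integrates by parts, whereas you introduce the potential $\Phi = W\star(\rho-\rho^\e)$, rewrite $\intt |\nabla\Phi|^2\,dx$ as the pairing $\intt \Phi\,(\rho-\rho^\e)\,dx$ before differentiating, and then proceed identically; this is a cosmetic reordering, and your closing remarks on justifying the integrations by parts and the time derivative add care that the paper leaves implicit.
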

\begin{proof}Using the equation for $\rho^\e - \rho$ in \eqref{dconti}, we find
\begin{align}\label{est_w1}
\begin{aligned}
\frac12\frac{d}{dt}\intt |\nabla W \star (\rho - \rho^\e)|^2\,dx &= \intt (\nabla W \star (\rho - \rho^\e))\cdot (\nabla W \star (\pa_t (\rho - \rho^\e)))\,dx\cr
&=- \intt (\Delta W \star (\rho - \rho^\e)) \lt(W\star (\pa_t (\rho - \rho^\e))\rt)dx\cr
&=  \intt  (\rho - \rho^\e) \lt(W\star (\pa_t (\rho -  \rho^\e))\rt)dx.\cr
\end{aligned}
\end{align}
We then use the symmetry of $W$ to get
\begin{align}\label{est_w2}
\begin{aligned}
&\intt (\rho - \rho^\e) \lt(W\star (\pa_t (\rho -  \rho^\e))\rt)dx\cr
&\quad =-\inttt (\rho - \rho^\e)(x) W(x-y) \lt( \nabla_y \cdot (\rho u)(y) - \nabla_y \cdot (\rho^\e u^\e)(y) \rt) dxdy\cr
&\quad = \inttt (\rho - \rho^\e)(x) \nabla_y \lt( W(x-y)\rt) \cdot \lt( (\rho u)(y) - (\rho^\e u^\e)(y) \rt) dxdy\cr
&\quad =-\inttt (\rho - \rho^\e)(x) \nabla_x W(x-y) \cdot \lt( (\rho u)(y) - (\rho^\e u^\e)(y) \rt) dxdy\cr
&\quad =\inttt (\rho - \rho^\e)(y) \nabla_x W(x-y) \cdot \lt( (\rho u)(x) - (\rho^\e u^\e)(x) \rt) dxdy\cr
&\quad = \intt \nabla W\star(\rho - \rho^\e)  \cdot \lt((\rho u) - (\rho^\e u^\e)\rt) dx.
\end{aligned}
\end{align}
We finally combine \eqref{est_w1} and \eqref{est_w2} to conclude the proof.
\end{proof}
\begin{proof}[Proof of Theorem \ref{main_thm2}] As mentioned above, we only need to estimate $I_3$ in \eqref{est_re0}. 
\begin{align}\label{I_3}
\begin{aligned}
I_3 &= \frac{1}{\e}\int_0^t \intt \nabla W \star (\rho - \rho^\e) \cdot \rho^\e(u^\e - u)  \,dxd\tau\cr
&=\frac{1}{\e}\int_0^t \intt \nabla W \star (\rho - \rho^\e) \cdot u (\rho - \rho^\e) \,dxd\tau - \frac1\e\int_0^t \intt \nabla W \star (\rho - \rho^\e) \cdot ( \rho u - \rho^\e u^\e) \,dxd\tau,
\end{aligned}
\end{align}
where the second term on the right hand side of the above equality can be rewritten as
$$\begin{aligned}
&-\frac{1}{2\e}\int_0^t \lt(\frac{d}{d\tau}\intt |\nabla W \star (\rho - \rho^\e)|^2\,dx\rt)d\tau\cr
&\quad =-\frac{1}{2\e}\intt |\nabla W \star (\rho - \rho^\e)|^2\,dx + \frac{1}{2\e}\intt |\nabla W \star (\rho_0 - \rho^\e_0)|^2\,dx
\end{aligned}$$
due to Lemma \ref{lem_wd}. On the other hand, by using the integration by parts, the first term on the right hand side of \eqref{I_3} can be estimated as
$$\begin{aligned}
&\frac1\e\int_0^t \intt \nabla W \star (\rho - \rho^\e) \cdot u (\rho - \rho^\e)\,dxd\tau \cr
&\quad = - \frac1\e\int_0^t\intt \nabla W \star (\rho - \rho^\e) \cdot u \lt(\Delta W \star (\rho - \rho^\e)\rt)\,dxd\tau\cr
&\quad = -\frac1{2\e}\int_0^t\intt |\nabla W \star (\rho - \rho^\e)|^2 \nabla \cdot u\,dxd\tau +\frac1\e\int_0^t \intt \nabla W \star (\rho - \rho^\e)\otimes \nabla W \star (\rho - \rho^\e) : \nabla u\,dxd\tau.
\end{aligned}$$
Combining all of the above observations implies
$$\begin{aligned}
I_3 &\leq -\frac{1}{2\e}\intt |\nabla W \star (\rho - \rho^\e)|^2\,dx + \frac{1}{2\e}\intt |\nabla W \star (\rho_0 - \rho^\e_0)|^2\,dx + \frac{3}{2\e}\|\nabla u\|_{L^\infty}\int_0^t \intt |\nabla W \star (\rho - \rho^\e)|^2\,dxd\tau.
\end{aligned}$$
This together with the estimates in the proof of Theorem \ref{main_thm} asserts
\begin{align}\label{est_re1}
\begin{aligned}
&\frac12\intt \rho^\e |u^\e - u|^2\,dx + \frac{1}{2\e}\intt |\nabla W \star (\rho - \rho^\e)|^2\,dx + \frac{(\gamma-C)}{\e} \int_0^t \intt \rho^\e |u^\e - u|^2\,dx d\tau\cr
&\quad + \frac1{2\e}\int_0^t\inttt \phi(x-y) \rho^\e(x) \rho^\e(y) |(u^\e(x) - u(x)) - (u^\e(y) - u(y)) |^2\,dxdyd\tau\cr
&\qquad \leq \frac12\intt \rho^\e_0 |u^\e_0 - u_0|^2\,dx +  \frac{1}{2\e}\intt |\nabla W \star (\rho_0 - \rho^\e_0)|^2\,dx  + \frac C\e \int_0^t \intt |\nabla W \star (\rho - \rho^\e)|^2\,dx d\tau + C\e
\end{aligned}
\end{align}
for $\e < 1$, where $C>0$ is independent of $\e>0$. This gives
$$\begin{aligned}
\intt |\nabla W \star (\rho - \rho^\e)|^2\,dx &\leq \e\intt \rho^\e_0 |u^\e_0 - u_0|^2\,dx +  \intt |\nabla W \star (\rho_0 - \rho^\e_0)|^2\,dx \cr
&\quad + C \int_0^t \intt |\nabla W \star (\rho - \rho^\e)|^2\,dx d\tau + C\e^2
\end{aligned}$$
for $\gamma > 0$ sufficiently large, and by applying Gr\"onwall's lemma to the above we obtain
\[
\intt |\nabla W \star (\rho - \rho^\e)|^2\,dx \leq C\e\intt \rho^\e_0 |u^\e_0 - u_0|^2\,dx +  C\intt |\nabla W \star (\rho_0 - \rho^\e_0)|^2\,dx + C\e^2
\]
for $\e < 1$, where $C>0$ is independent of $\e>0$. We again put this into \eqref{est_re1} to conclude
$$\begin{aligned}
&\frac12\intt \rho^\e |u^\e - u|^2\,dx + \frac{1}{2\e}\intt |\nabla W \star (\rho - \rho^\e)|^2\,dx + \frac{(\gamma-C)}{\e} \int_0^t \intt \rho^\e |u^\e - u|^2\,dx d\tau\cr
&\quad + \frac1{2\e}\int_0^t\inttt \phi(x-y) \rho^\e(x) \rho^\e(y) |(u^\e(x) - u(x)) - (u^\e(y) - u(y)) |^2\,dxdyd\tau\cr
&\qquad \leq C\intt \rho^\e_0 |u^\e_0 - u_0|^2\,dx +  \frac{C}{\e}\intt |\nabla W \star (\rho_0 - \rho^\e_0)|^2\,dx + C\e
\end{aligned}$$
for $\e < 1$, where $C>0$ is independent of $\e>0$. This completes the proof.
\end{proof}

%%%%%%%%%%%%%%%%%%%%%%%%%%%%%%%%%%%%%%%%%%%%%%%%%%%%%%%%%%%%%%%%%%%%%%%%%%%%%%%%%
%
%
%                      \section*{Acknowledgments}
%
%
%%%%%%%%%%%%%%%%%%%%%%%%%%%%%%%%%%%%%%%%%%%%%%%%%%%%%%%%%%%%%%%%%%%%%%%%%%%%%%%%%
\section*{Acknowledgments}
The author was supported by National Research Foundation of Korea(NRF) grant funded by the Korea government(MSIP) (No. 2017R1C1B2012918), POSCO Science Fellowship of POSCO TJ Park Foundation, and Yonsei University Research Fund of 2019-22-021.

%%%%%%%%%%%%%%%%%%%%%%%%%%%%%%%%%%%%%%%%%%%%%%%%%%%%%%%%%%%%%%%%%%%%%%%%%%%%%%%%%
%
%
%                    \section{Remarks on the regularity assumptions on $u$}
%
%
%%%%%%%%%%%%%%%%%%%%%%%%%%%%%%%%%%%%%%%%%%%%%%%%%%%%%%%%%%%%%%%%%%%%%%%%%%%%%%%%%

\appendix

\section{Remarks on the regularity assumptions on $u$}\label{app_a}

Since $\rho > 0$, it follows from \eqref{main_eq22} that
\begin{align}\label{eqn_u}
\begin{aligned}
\gamma u(x,t) &= - (\nabla W \star \rho)(x,t) + \intt \phi(x-y)(u(y,t) - u(x,t))\rho(y,t)\,dy \cr
&= - (\nabla W \star \rho)(x,t) + (\phi \star (\rho u))(x,t) -   u(x,t) (\phi \star \rho)(x,t).
\end{aligned}
\end{align}
We first start with the estimate of $\|u\|_{L^\infty}$. From \eqref{eqn_u}, we easily get
\begin{align*}
\begin{aligned}
\lt|u(x,t)\rt| &= \lt|\frac{1}{\gamma +(\phi \star \rho)(x,t)}\lt(- (\nabla W \star \rho)(x,t) + (\phi \star (\rho u))(x,t)  \rt)\rt|\cr 
&\leq \frac1\gamma\lt(\|\nabla W \star \rho\|_{L^\infty} + \|\phi\|_{L^\infty}\|\rho\|_{L^1}\|u\|_{L^\infty} \rt),
\end{aligned}
\end{align*}
and subsequently this implies
\bq\label{est_u1}
\|u\|_{L^\infty} \leq \frac{\|\nabla W \star \rho\|_{L^\infty}}{\gamma - \|\phi\|_{L^\infty}\|\rho\|_{L^1}}
\eq
for $\gamma > 0$ large enough. We next estimate $\|\nabla u\|_{L^\infty}$. Taking the differential operator $\pa_{x_j}$ to \eqref{eqn_u} gives
\begin{align*}
\begin{aligned}
\pa_{x_j} u_i &= \frac{1}{\gamma + \phi \star \rho} \lt(-\pa_{x_j}\pa_{x_i} W \star \rho + \pa_{x_j}\phi \star (\rho u_i) \rt) - \frac{1}{(\gamma + \phi \star \rho)^2} \lt( \pa_{x_i} W \star \rho + \phi \star (\rho u_i)\rt) (\pa_{x_j} \phi \star \rho).
\end{aligned}
\end{align*}
This together with \eqref{est_u1} yields
\begin{align*}
\begin{aligned}
\|\nabla u\|_{L^\infty} 
&\leq \frac1\gamma \lt( \|\nabla(\nabla W \star \rho)\|_{L^\infty} + \|\nabla \phi\|_{L^\infty}\|\rho u\|_{L^1}\rt) + \frac{1}{\gamma^2} \lt( \|\nabla W \star \rho\|_{L^\infty}+ \|\phi\|_{L^\infty}\|\rho u\|_{L^1}\rt)\|\nabla \phi\|_{L^\infty}\|\rho\|_{L^1}\cr
&\leq \lt(1 + \frac{\|\nabla \phi\|_{L^\infty}\|\rho\|_{L^1}}{\gamma} \rt) \frac{1}{\gamma}\lt(\|\nabla W \star \rho\|_{\W^{1,\infty}} + \|\rho\|_{L^1}\|\phi\|_{\W^{1,\infty}}\|u\|_{L^\infty} \rt)\cr
&\leq \lt(1 + \frac{\|\nabla \phi\|_{L^\infty}\|\rho\|_{L^1}}{\gamma} \rt) \frac{1}{\gamma}  \lt(\|\nabla W\star\rho\|_{\W^{1,\infty}} + \frac{\|\rho\|_{L^1}\|\phi\|_{\W^{1,\infty}}\|\nabla W \star\rho\|_{L^\infty}}{\gamma - \|\phi\|_{L^\infty}\|\rho\|_{L^1}} \rt).
\end{aligned}
\end{align*}
We finally estimate $\|\pa_t u\|_{L^\infty}$. Similarly as before, we differentiate the equation \eqref{eqn_u} with respect to time $t$ to find
\begin{align}\label{est_ut}
\begin{aligned}
\pa_t u &= \frac{1}{\gamma + \phi \star \rho} \lt( -\nabla W \star \pa_t \rho + \phi \star (\pa_t (\rho u))\rt) -\frac{1}{(\gamma + \phi \star \rho)^2} \lt( -\nabla W \star \rho + \phi \star (\rho u)\rt)\lt( \phi \star \pa_t \rho\rt).
\end{aligned}
\end{align}
Here by using the continuity equation \eqref{main_eq2} we can estimate
\[
|\nabla W \star \pa_t \rho| = |\nabla^2 W \star (\rho u)| \leq \|\nabla(\nabla W \star\rho)\|_{L^\infty}\|u\|_{L^\infty}
\]
and
\[
|\phi \star \pa_t \rho| \leq \|\nabla \phi\|_{L^\infty}\|u\|_{L^\infty}\|\rho\|_{L^1}.
\]
Moreover, we also obtain
\begin{align*}
\begin{aligned}
|\phi \star (u\pa_t \rho)| &\leq \intt \rho(y)|u(y)\cdot \nabla \phi(x-y)| |u(y)| \,dy  + \intt \rho(y)|u(y)\cdot \nabla u(y)|\phi(x-y) \,dy\cr
&\leq \|u\|_{L^\infty}^2 \|\nabla \phi\|_{L^\infty}\|\rho\|_{L^1} + \|\phi\|_{L^\infty}\|\nabla u\|_{L^\infty}\|u\|_{L^\infty}\|\rho\|_{L^1}\cr
&\leq 2\|\phi\|_{\W^{1,\infty}}\|u\|_{\W^{1,\infty}}^2\|\rho\|_{L^1},
\end{aligned}
\end{align*}
which allows us to estimate
\begin{align*}
\begin{aligned}
|\phi \star (\pa_t (\rho u))| &= |\phi \star (u \pa_t\rho) + \phi \star (\rho\pa_t u)| \leq 2\|\phi\|_{\W^{1,\infty}}\|u\|_{\W^{1,\infty}}^2\|\rho\|_{L^1} + \|\phi\|_{L^\infty}\|\rho\|_{L^1}\|\pa_t u\|_{L^\infty}.
\end{aligned}
\end{align*}
Then we now combine all of the above estimates with \eqref{est_ut} to have
\begin{align*}
\begin{aligned}
\|\pa_t u\|_{L^\infty} &\leq \frac1\gamma \lt(\|\nabla W \star \pa_t \rho\|_{L^\infty} + \|\phi \star (\pa_t (\rho u))\|_{L^\infty} \rt) + \frac{1}{\gamma^2} \lt(\|\nabla W \star \rho\|_{L^\infty} + \|\phi \star (\rho u)\|_{L^\infty} \rt) \|\phi \star \pa_t \rho\|_{L^\infty}\cr
&\leq \frac1\gamma \lt( \|\nabla(\nabla W \star\rho)\|_{L^\infty}\|u\|_{L^\infty} + 2\|\phi\|_{\W^{1,\infty}}\|u\|_{\W^{1,\infty}}^2 \|\rho\|_{L^1} \rt)  + \frac1\gamma\|\phi\|_{L^\infty}\|\rho\|_{L^1}\|\pa_t u\|_{L^\infty} \cr
&\quad + \frac{1}{\gamma^2}\lt(\|\nabla W \star \rho\|_{L^\infty} + \|\phi\|_{L^\infty}\|u\|_{L^\infty}\|\rho\|_{L^1} \rt) \|\nabla\phi\|_{L^\infty}\|u\|_{L^\infty}\|\rho\|_{L^1}.
\end{aligned}
\end{align*}
Hence we finally have
\begin{align*}
\begin{aligned}
\|\pa_t u\|_{L^\infty} &\leq \frac{1}{\gamma - \|\phi\|_{L^\infty}\|\rho\|_{L^1}}\lt( \|\nabla(\nabla W \star\rho)\|_{L^\infty}\|u\|_{L^\infty} + 2\|\rho\|_{L^1}\|\phi\|_{\W^{1,\infty}}\|u\|_{\W^{1,\infty}}^2  \rt)\cr
&\quad + \frac{\|\rho\|_{L^1}\|\nabla \phi\|_{L^\infty}\|u\|_{L^\infty}}{\gamma(\gamma - \|\phi\|_{L^\infty}\|\rho\|_{L^1})}\lt(\|\nabla W \star\rho\|_{L^\infty} + \|\rho\|_{L^1}\|\phi\|_{L^\infty}\|u\|_{L^\infty} \rt).
\end{aligned}
\end{align*}
As observed above, since $\|u\|_{\W^{1,\infty}}$ can be bounded from above by some constant which depends only on $\|\nabla W \star \rho\|_{\W^{1,\infty}}$, $\|\phi\|_{\W^{1,\infty}}$, $\|\rho\|_{L^1}$, and $\gamma$, so does $\|\pa_t u\|_{L^\infty}$.

%%%%%%%%%%%%%%%%%%%%%%%%%%%%%%%%%%%%%%%%%%%%%%%%%%%%%%%%%%%%%%%%%%%%%%%%%%%%%%%%%
%
%
%                        thebibliography
%
%
%%%%%%%%%%%%%%%%%%%%%%%%%%%%%%%%%%%%%%%%%%%%%%%%%%%%%%%%%%%%%%%%%%%%%%%%%%%%%%%%%

\end{document}